\pgfplotsset{compat=1.16}
\crefname{hypothesis}{Hypothesis}{Hypotheses}
\Crefname{ALC@unique}{Line}{Lines}
\colorlet{texcscolor}{blue!50!black}
\colorlet{texemcolor}{red!70!black}
\colorlet{texpreamble}{red!70!black}
\colorlet{codebackground}{black!25!white!25}
\newcommand{\probName}{machine-learning augmented hybrid simulation}
\newcommand{\algName}{tangent-space regularized estimator}
\newcommand{\abbrvalgName}{\mathrm{TR}}
\newcommand{\eig}{\mathrm{eig}}
\newcommand{\OLS}{\mathrm{OLS}}
\lstdefinestyle{siamlatex}{%
  style=tcblatex,
  texcsstyle=*\color{texcscolor},
  texcsstyle=[2]\color{texemcolor},
  keywordstyle=[2]\color{texemcolor},
  moretexcs={cref,Cref,maketitle,mathcal,text,headers,email,url},
}
\DeclareTotalTCBox{\code}{ v O{} }
{ 
  fontupper=\ttfamily\color{black},
  nobeforeafter,
  tcbox raise base,
  colback=codebackground,colframe=white,
  top=0pt,bottom=0pt,left=0mm,right=0mm,
  leftrule=0pt,rightrule=0pt,toprule=0mm,bottomrule=0mm,
  boxsep=0.5mm,
  #2}{#1}
\patchcmd\newpage{\vfil}{}{}{}
\title{Mitigating distribution shift in machine learning-augmented hybrid simulation}
\author{
    Jiaxi Zhao\thanks{Department of Mathematics, National University of Singapore, 117543, Singapore (\email{jiaxi.zhao@u.nus.edu}).}
    \and
    Qianxiao Li\thanks{Department of Mathematics \&
    Institute for Functional Intelligent Materials, National University of Singapore, 117543, Singapore (\email{qianxiao@nus.edu.sg}).}
}
\begin{document}

\maketitle

\begin{tcbverbatimwrite}{tmp_\jobname_abstract.tex}
\begin{abstract}

    We study the problem of distribution shift generally arising in \probName,
    where parts of simulation algorithms are
    replaced by data-driven surrogates.
    A mathematical framework is established to 
    understand the structure of \probName \ problems and the cause and effect of the associated distribution shift.
    We show correlations between distribution shift and simulation error both
    numerically and theoretically. Then, we propose a simple methodology based
    on \algName \ to control the distribution shift,
    thereby improving the long-term accuracy of the simulation results.
    In the linear dynamics case, we provide a thorough theoretical analysis to
    quantify the effectiveness of the proposed method. Moreover, we conduct
    several numerical experiments, including simulating a partially known
    reaction-diffusion equation and solving Navier-Stokes equations using the
    projection method with a data-driven pressure solver. In all cases, we
    observe marked improvements in simulation accuracy under the proposed
    method, especially for systems with high degrees of distribution shift, such
    as those with relatively strong non-linear reaction mechanisms,
    or flows at large Reynolds numbers.

\end{abstract}

\begin{keywords}
    machine learning,
    distribution shift,
    regularization,
    error analysis,
    fluid dynamics
\end{keywords}

\begin{MSCcodes}
68T99, 65M15, 37M05
\end{MSCcodes}
\end{tcbverbatimwrite}
\input{tmp_\jobname_abstract.tex}

\section{Introduction}
Many scientific computational applications, such as computational fluid dynamics (CFD)
and molecular dynamics (MD) can be
viewed as dynamical system modeling and simulation problems, which are tackled by
rigorous numerical tools with theoretical guarantee~\cite{slotnick2014cfd}. 
However, in many cases a part of the simulation workflow, such
as the Reynolds stresses in Reynolds-averaged Navier-Stokes equation (RANS)~\cite{alfonsi2009reynolds} and the exchange-correlation energies
in density function theory used to compute force fields that drive
MD simulations~\cite{lin2019numerical}, depends on models that are
either expensive to compute or even unknown in practice.
One thus often resorts to a \emph{hybrid} simulation method,
where the known, resolved components of the dynamics are computed exactly,
while the unresolved components are replaced by
approximate, but computationally tractable models. For example, solving Navier-Stokes equations using the projection method involves 
two steps. In the first step, all the terms except for the gradient of the pressure are used to evolve the velocity. This step is 
computationally cheap and thus understood as the resolved part. Next, the pressure is solved from a Poisson equation and then used 
to correct the velocity. Most of the computational cost is contained in solving this Poisson equation, and we thereby viewed this 
as the unresolved part. We call such scientific computing problems with both resolved and unresolved
parts ``hybrid simulation problems''. Similar problems are surveyed in~\cite{wang2021physics} under the name of ``Hybrid physics-DL 
models''.

As machine learning becomes increasingly powerful in areas like computer vision
and natural language processing, practitioners begin to use data-driven modules
to model the unresolved part to carry out the simulation.
For example, in~\cite{tompson2017accelerating}
the author replaced the numerical Poisson solver
of the unresolved part with a convolutional neural network trained using a novel
unsupervised learning framework. Then, this data-driven model is coupled with
resolved models and provides fast and realistic simulation results in 2D and 3D.
Similar ideas are used elsewhere, e.g. RANS~\cite{ling2016reynolds} and LES~\cite{shankar2023differentiable2}. We
hereafter refer to this simulation procedure as machine-learning augmented
hybrid simulation (MLHS). This structure represents a great part of the scientific machine learning 
research~\cite{yu2021onsagernet, stachenfeld2022learned} and will be the focus of this paper.
In MLHS, a common problem appears: while the data-drive model performs well on the training data, the performance quickly 
deteriorates when iteratively applying it in simulation, driving the dynamics to some regimes that are not observed from the
training source. Empirical evidences have been observed in various applications,
such as CFD~\cite{shankar2023differentiable2, stachenfeld2022learned, pedersen2023reliable, yu2021onsagernet}, molecular
dynamics~\cite{zhang2019active, zhang2018deep, lin2020data}, and iterative numerical solver~\cite{10.5555/3618408.3618451}, 
but these problems are not
well-studied algorithmically and theoretically in the literature.

This issue has strong connections with the so-called distribution shift (DS) in
computer science applications,
especially in reinforcement learning and computer vision.
Researchers use DS to refer to problems where training and
testing distribution differ significantly. An example is an image classifier
trained with images taken in the daytime that is tested under night conditions~\cite{koh2021wilds}. Therefore, the accuracy during the training itself cannot
guarantee the performance during inference. To resolve this,
researchers have systematically
developed several methods, e.g. domain adaptation~\cite{farahani2021brief},
ensemble learning~\cite{dong2020survey}, and
meta-learning~\cite{vanschoren2018meta}. However, there are key differences between the distribution shift phenomena in MLHS and that in traditional applications between the DS phenomena in MLHS and computer vision.
While distribution shift in computer vision lacks a theoretical model to describe~\cite{koh2021wilds}, that in MLHS comes from dynamical systems for which we have abundant knowledge on the resolved parts of the models. 
For example, let us consider using the projection method~\cite{weinan1995projection} to solve the incompressible Navier-Stokes equation.
Suppose we replace the Poisson pressure solver with a data-driven model based on neural networks.
Then, the dynamics of this data-driven hybrid numerical solver will
largely depend on the properties of the resolved numerical part of the
projection method, which is well-studied~\cite{weinan1995projection}.
By analyzing the stability properties of the
dynamics, one can quantify which family of distribution shifts may arise and
resolve them according to the information of the dynamics. Therefore, one can
pose the following question: \textit{Can we use the information of resolved parts to design robust learning algorithms for unresolved parts to improve hybrid simulation?}

In this paper, we first develop a mathematical framework to understand the
origin of distribution shift in MLHS and how it may lead to simulation performance deterioration. We emphasize
the difference between this instability issue in data-driven scientific computing
and distribution shift in the machine learning literature, such as reinforcement
learning and computer vision. Then, we propose an algorithm to improve the
simulation accuracy by mitigating distribution shifts.
We assume by manifold hypothesis~\cite{narayanan2010sample} that the correct
trajectories
lie on a low-dimensional manifold of the high-dimensional space, e.g. the fluid
configuration of the Navier-Stokes equation lies on the solution manifold of the
high-dimensional grid space. The key idea is to combine the physical information
of the resolved part of the model, i.e. Navier-Stokes equation and this manifold
structure learned from the data to form a regularization term for the
data-driven model. Intuitively speaking, this regularizer stabilizes the dynamics by preventing it from moving further away from the data manifold.
Such movements may result in configurations that are either
non-physical, or corresponds to different initial/boundary conditions.
Therefore, preventing such movements reduces the severity of
distribution shift, and the
data-driven model will stay relatively accurate during the simulation, which promotes high simulation fidelity in a long time interval.
In implementation, we first use an autoencoder (AE) to parameterize the underlying
data manifold. After this preprocessing, the AE is combined with the resolved dynamics and plugged into the loss
function of the model as a regularization, which prevents the simulation from
moving to unseen regimes. One then back-propagates this modified loss function
to optimize the data-driven model.
The algorithm is tested on several
representative numerical examples to demonstrate its effectiveness.
We show that the proposed approach can
improve simulation accuracy under different extents of distribution shift.
Indeed, the improvements become more significant in scenarios where the
fidelity of the simulation is highly sensitive to errors introduced in the data-driven
surrogates, such as transport-dominant fluid simulations and
reaction-diffusion equations with relatively strong non-linear reactions.
Specifically, in fluid simulations, naive
data-driven surrogate models may quickly cause error blow-ups,
but our method can maintain simulation accuracy over large time intervals.

The paper is structured as follows. In \cref{framework}, we establish a precise
framework to identify the origins of distribution shift in MLHS. In \cref{theory}, we introduce our regularized learning algorithm motivated
by the analysis of the previously identified form of
distribution shift and theoretically understand its performance in the linear setting.
We also discuss connections with the literature on control and system
identification. In \cref{numerics}, we validate our algorithm on several
practical numerical cases, including simulating a reaction-diffusion equation and the incompressible Navier-Stokes equation.

\section{Mathematical formulation of MLHS and the problem of distribution shift}\label{framework}

In this section, we provide the mathematical formulation of MLHS and then
identify the problem of distribution shift. In the first subsection, we give a
general treatment, and we provide concrete examples in \cref{sec:examples}.

\subsection{The resolved and unresolved components in hybrid simulations}

Throughout this paper, we consider the dynamics
\bequ\label{non-linear}
    \begin{aligned}
         \p_t\mfu = \mcL(\mfu, \mfy, t), \quad \mfy = \phi(\mfu, t),
    \end{aligned}
\eequ
where $\mfu$ is the resolved state variable, e.g. fluid velocity field
or chemical concentration field.
The vector $\mfy$ is the unresolved variable that drives the dynamics for
$\mfu$ but is either expensive to compute
or cannot be directly observed.
Throughout the paper, we will omit writing the explicit time-dependence
by using $\mfu$ ($\mfy$) to denote $\mfu(t)$ ($\mfy(t)$)
whenever there is no ambiguity.
In numerical simulation, one always discretizes the ODE or PDE to obtain a
finite-dimensional system, i.e. $\mfu \in \mbR^m, \mfy \in \mbR^n, \mcL: \mbR^m \times \mbR^n \times \mbR_+ \rightarrow \mbR^m, \phi: \mbR^m \times \mbR_+ \rightarrow \mbR^n$.
Therefore, we restrict the discussion to the finite dimension case where both
$\mcL, \phi$ becomes mapping between finite-dimensional spaces,
but most of the analysis here applies to the more general case of $\mfu, \mfy$
belonging to an infinite dimensional Hilbert space.
We adopt the following assumptions associated with~\cref{non-linear}:
\begin{itemize}
    \item 1. Resolved component: $\mcL$ is known, possibly non-linear.
    \item 2. Unresolved component: $\phi$ is either unknown or expensive to evaluate.
\end{itemize}
Given such a system, the goal is to first obtain information on the unresolved
model and then integrate it with the resolved part to simulate the whole dynamics.
This hybrid structure, which we call hybrid simulation problems, is general enough to include many applications, e.g. when simulating the trajectories of molecular dynamics
which satisfy some stochastic differential equations or the time-evolution of fluid velocity fields
following the Navier-Stokes equation. We focus on MLHS,
a particular variation of such hybrid simulation problems
where the unresolved component is tackled by a data-driven method.
This point will be made more precise in \cref{learning}.
To simplify the analysis, we adopt the forward Euler time discretization for simulation
\bequ
\begin{aligned}
\wht \mfu_{k+1} & = \wht \mfu_{k} + L(\wht\mfu_k, \wht\mfy_k)\Delta t,
\qquad
\wht \mfy_{k} = \phi(\wht \mfu_{k}),
\end{aligned}
\eequ
while other consistent discretizations may be analyzed in the same spirit. Here we drop the dependence of $L, \phi$ on $t$ to consider autonomous systems while our later 
discussion and algorithm are also applicable to non-autonomous systems.

\subsection{Examples of hybrid simulation problems}\label{sec:examples}
Before moving on to distribution shift, we first provide some examples in MLHS of our interest, which are also closely related to our numerical experiments. We will present 
these examples according to the structure \cref{non-linear}.

The first example is solving the 2D incompressible Navier-Stokes equation using the projection method~\cite{guermond2006overview,weinan1995projection},
a variant of which is as follows:
\begin{equation}\label{NS}
    \begin{aligned}
        	\frac{\p \mfu}{\p t} + (\mfu \cdot \nabla)\mfu -  \nu \Delta \mfu =   \nabla p,\quad \nabla \cdot \mfu = 0,\quad T \in [0, 1],
    \end{aligned}
\end{equation}
where $\mfu = (u(x, y, t), v(x, y, t))^T \in \mbR^2$ is the velocity and $p$
is the pressure. Fix a regular grid size for discretization. The projection
method can be written as follows
\bequ\label{projection}
    \begin{aligned}
        \mfu_{k+1} & = \mfu_k + L(\mfu_k, p_k )\Delta t = \mfu_k +
         (\nu \Delta \mfu_k
        - (\mfu_k \cdot \nabla)\mfu_k - \nabla p_{k})\Delta t,    \\
        p_{k} & = \phi(\mfu_k) = \Delta^{-1}(\nabla \cdot \lp \nu \Delta \mfu_k
        - (\mfu_k \cdot \nabla)\mfu_k\rp),   \\
    \end{aligned}
\eequ
We write $\Delta^{-1}$ as the formal inverse operator for the Laplace operator.
The resolved part is performed by first stepping the
convection and diffusion term then using pressure to correct the step, and the
unresolved part is a Poisson equation that relates the unsolved state $p_k$ with the
velocity $\mfu_k$.
Here, $p_{k}$ represents the unresolved variable $\mfy_k$ in our
formulation~\cref{non-linear}.

In this method, the most expensive step is the pressure computation,
which requires repeated solutions of similar large-scale linear equations.
State-of-the-art solvers such as multigrid~\cite{briggs2000multigrid}
and conjugate gradient~\cite{10.5555/865018} become prohibitively expensive
when the problem size is large.
Recently, a promising direction is replacing the Poisson solver by
data-driven surrogate models. Ref.~\cite{tompson2017accelerating}
replaces the pressure calculation step with a convolutional neural network
trained with unsupervised learning by requiring the
updated velocity to have zero divergence.
In the same spirit, machine learning models such
as tensor-based neural networks are used in~\cite{ling2016reynolds} to replace
classical turbulence modeling for unclosed Reynolds stress tensors. This model
is then plugged into the Reynolds-averaged Navier-Stokes simulation to predict
the flow separation. More recently,~\cite{pichi2023graph} combines the idea of
reduced-order modeling with graph convolutional neural network to encode the
reduced manifold and enable fast evaluations of parameterized PDEs.

The second example concerns simulations on two grids of different sizes.
Let us continue with the
projection method setting~\cref{projection}.
Denote the time-evolution operator on
grid size $n$ as $f_n: \mfu_k^n \rightarrow \mfu_{k+1}^n$, which becomes very
expensive to compute when $n$ is large.
Therefore, one may wish to replace
the fine-grid solver with a coarse-grid one and add some corrections.
This again reduces the computational burden for high-fidelity simulations.
This structure shows up in various scientific computing situations, such as multigrid
solvers~\cite{briggs2000multigrid}, Reynolds-averaged Navier-Stokes
(RANS)~\cite{pope2000turbulent, alfonsi2009reynolds}, and large eddy simulation
(LES)~\cite{zhiyin2015large}.
Suppose we have a fine grid with size $2n\times 2n$ and a coarse grid with size
$n \times n$, we use superscript
$n, 2n$ to denote the field defined on these two
grids, i.e. $\mfu_k^n, \mfu_k^{2n}$.
Moreover, we fix an interpolation and a restriction operator between fields on these two grids:
\bequ
	\begin{aligned}
	I_n^{2n}: \mbR^{n \times n} \rightarrow \mbR^{2n \times 2n}, \quad R_{2n}^{n}: \mbR^{2n \times 2n} \rightarrow \mbR^{n \times n}.
	\end{aligned}
\eequ
Now, we can state the second hybrid simulation problem as follows:
\bequ\label{coarse-fine}
	\lbb\begin{aligned}
		\mfu_{k+1}^{2n} & = I_n^{2n} \circ f_n(R_{2n}^{n}(\mfu_k^{2n})) + \mfy_k^{2n},		\\
		\mfy_k^{2n} & = \phi(\mfu_k^{2n}).
	\end{aligned}\right.
\eequ
In detail, the resolved component contains a solver of the evolution equation
over the coarse grid which marches forward by one time step, with the field given by
a restriction of the field on the fine grid.  Then, the next step field
configuration is interpolated to the fine grid and the unresolved component
serves to correct the deviation of the field variables between
simulating on grids of different sizes.
The unresolved variables $\mfy_k^{2n}$ can be calculated if we
have accurate simulation results $\mfu_k^{2n}$ on the fine grid by $\mfy_k^{2n} = \mfu_{k+1}^{2n} - I_n^{2n} \circ f_n(R_{2n}^{n}(\mfu_k^{2n}))$.
In the literature,~\cite{huang2022learning} treats the smoothing algorithm in
multigrid solver as unresolved components and other procedures such as
restriction and prolongation as resolved components. This fits into our framework
of MLHS.
They introduce a supervised loss function based on multigrid convergence theory
to learn the optimal smoother and improve the convergence rate over anisotropic
rotated Laplacian problems and variable coefficient diffusion problems. In
~\cite{maulik2019subgrid}, the authors apply neural networks with non-linear
regression to fit key fluid features and sub-grid stresses. Going beyond this,
~\cite{pedersen2023reliable} adds neural emulation to offline learning to
prevent the trajectories 
from deviating away from the ground truth. This data-driven model is plugged
into hybrid simulations to test its ability to preserve coherent structure and
scaling laws.

\subsection{Learning the unresolved model and distribution shift}
\label{learning}
We assume that the resolved part $\mathcal{L}$ is known,
or we at least have access to a gray box that can perform
its evaluation and compute its gradients.
While the unresolved part is unknown, we can
access data tuples $\lbb (\mfu_1, \mfy_1, t_1), (\mfu_2, \mfy_2, t_2),
\cdots, (\mfu_N, \mfy_N, t_N )\rbb$ which are obtained either by
physical experiments or accurate but expensive numerical simulations.
The goal is hence to construct an approximate mapping
$(\mfu_k, t_k) \mapsto \mfy_k$ from this dataset.
Viewing this as a supervised
learning task, there are many existing methods based on empirical risk
minimization~\cite{hastie2009overview, hastie2009elements, lecun2015deep}. One
of the simplest methods is to minimize the mean squared error. Given
a parameterized model $\phi_\theta$, e.g. a Gaussian
process or a neural network, we learn from data the
optimal value of the parameter $\theta$ that minimizes the error,
i.e. $\wht \theta = \arg\min_{\theta}
\mbE_{(\mfu,\mfy)} \norml \mfy - \phi_{\theta}(\mfu, t) \normr_2^2$. 
However, due to the dynamical structure, the i.i.d. assumption in statistical
learning~\cite{hastie2009elements} no longer holds in MLHS.
In particular, data belonging to the same dynamical trajectory will have a large correlation.
Moreover, an
appropriate performance evaluation is not necessarily the precise
identification of the system~\cite{ljung1998system} but the 
accuracy of the prediction of the trajectory over long times.
This is also known as a priori and
a posteriori performance in the closure modeling literature
~\cite{sanderse2024scientificmachinelearningclosure}.
Therefore, it is reasonable to expect that
simple empirical risk minimization
(which many existing MLHS works employ~\cite{stachenfeld2022learned, ren2022physics})
may not be the optimal method.
We will demonstrate this by using the least squares estimator
as a baseline comparison to our approach in~\cref{numerics}.

After determining the parameters of the unresolved model,
one can perform hybrid simulations
to obtain new trajectories, i.e.
\bequ\label{non-linear-simu}
    \begin{aligned}
    \wht \mfu_{k+1} = \wht \mfu_{k} + L(\wht\mfu_k, \wht\mfy_k, t_k)\Delta t,\quad \wht\mfy_k = \phi_{\wht \theta}(\wht\mfu_k, t_k).
    \end{aligned}
\eequ
Notice we add hat superscript to all the quantities related to this simulated dynamics to distinguish them from the ground truth $(\mfu_k, \mfy_k)$. We measure the performance 
of the simulator by the error along the whole trajectory, i.e.
\bequ\label{error-def}
    \min \sum_{k = 1}^n \norml \mfu_k - \wht \mfu_k \normr_2^2,
\eequ
where $\mfu_k$s belongs to the true trajectory and $\wht\mfu_k$s the simulated one with the same initial condition.

We begin by illustrating the issue of distribution shift by analyzing the trajectory error. For simplicity of presentation, we assume that the model hypothesis space
$\{\phi_\theta: \theta \in \Theta\}$ is bias-free,
meaning that there exists $\theta^* \in \Theta$
such that $\phi = \phi_{\theta^*}$. In the general case where the hypothesis space is universal but not closed,
this equality would be replaced by an approximate one,
but the argument follows analogously.
Comparing \cref{non-linear} and \cref{non-linear-simu}, we have
\bequ\label{decomp}
    \begin{aligned}
    \wht \mfu_{k+1} - \mfu_{k+1}        
    = & \ \wht \mfu_{k} - \mfu_{k} + \lp L(\mfu_k, \phi_{\wht\theta}(\mfu_k)) - L(\mfu_k, \phi_{\theta_*}(\mfu_k)) \rp \Delta t     \\
    & + \lp L(\wht\mfu_k, \phi_{\wht \theta}(\wht\mfu_k)) - L(\mfu_k, \phi_{\wht\theta}(\mfu_k)) \rp\Delta t.
    \end{aligned}
\eequ
The error introduced by moving one step forward is thus decomposed into two
parts: the error associated with the estimator $\wht \theta: L(\mfu_k,
\phi_{\wht\theta}(\mfu_k)) - L(\mfu_k, \phi_{\theta_*}(\mfu_k))$, and
the other error $L(\wht\mfu_k, \phi_{\wht \theta}(\wht\mfu_k)) - L(\mfu_k,
\phi_{\wht\theta}(\mfu_k))$. The former resembles the error appearing in
classical statistical inference problem under the i.i.d. setting since $\mfu_k$
follows an identical distribution of the training dataset. While the latter is different in the sense that $\wht \mfu_k$ and $\mfu_k$ may come from distributions of their 
respective driving dynamics, which are different since $\theta_*\neq \wht \theta$ due to noisy labels, few data points, or having an under-determined system. 
The difference between MLHS and statistical learning is also emphasized in~\cite{wang2021physics}.
The latter error $L(\wht\mfu_k, \phi_{\wht \theta}(\wht\mfu_k)) - L(\mfu_k,
\phi_{\wht\theta}(\mfu_k))$ is rather akin to the issue of stability
in numerical partial differential equations:
although the error in each step is relatively small,
it can accumulate exponentially as time step iterates,
driving $\wht \mfu_k$
to completely different regimes where we have not observed in the data $\mfu_k,
k=1,\cdots, N$. As a result,
the data-driven model $\wht\phi$ can no longer be trusted to be accurate,
since it is trained on data from a different distribution. This may then lead
to a vicious cycle, where further deterioration of trajectory error
occurs, leading to an increasing discrepancy between the distributions.
Consequently, one can understand the distribution
shift as a large discrepancy between the true distribution $\rho$ of $\mfu_k$
and simulated distribution $\wht \rho$ of $\wht \mfu_k$.
Concretely, we adopt the following definition for a simulation algorithm
\cref{non-linear-simu} to have distribution shift:
\begin{equation}\label{def:ds}
    \begin{aligned}
        \limsup_{t\rightarrow \infty}\frac{\log d(\rho_t, \wht \rho_t)}{t} \geq c > 0,
    \end{aligned}
\end{equation}
where $\rho_t, \wht \rho_t$ denote the distributions of the ground truth
$\mfu(t)$ and the simulation $\wht \mfu(t)$ starting from the initial condition
with same distribution $\rho_0$. $d(\cdot, \cdot)$ is an appropriate distance
between probability distributions. We emphasize that the definition of the
distribution shift relates to the time because of the dynamic nature of the
problem, which is different from traditional machine learning settings~\cite{koh2021wilds}.
The problem of distribution shift occurs commonly in MLHS.
For example,~\cite{ling2016reynolds, wu2019reynolds,poroseva2016accuracy} all
 observe unphysical solutions when substituting 
Reynolds stresses models learned from direct numerical simulation database into
RANS. Ref.~\cite{wu2019reynolds} attributes this to the ill-conditioning of the
RANS problems and propose a method to treat Reynolds stresses implicitly.
References~\cite{shankar2023differentiable2,shankar2023differentiable1}
deal with the subgrid-scale modeling in LES simulations by using neural
networks. Improved performance in simulations is achieved by taking into
consideration the whole trajectory accuracy in the loss function, which
implicitly combats distribution shift.
Despite several empirical successes, a general theoretical framework to understand
and combat distribution shift in MLHS is lacking.
The current paper aims to make progress in this direction.

Let us illustrate the problem of distribution shift using numerical examples.
We solve the 2D FitzHugh-Nagumo reaction-diffusion equation
over a periodic domain $[0,6.4]\times [0,6.4]$,
whose dynamic is given by
\begin{equation}\label{equ:RD}
    \begin{aligned}
        	\frac{\p \mfu}{\p t} & = D \Delta \mfu + \phi(\mfu), \quad T \in [0, 20], 	\\
		\phi(\mfu) & = \phi(u, v) = \begin{pmatrix}
			u - u^3 - v + \alpha	\\
			\beta(u - v)
		\end{pmatrix},
    \end{aligned}
\end{equation}
where $\mfu = (u(x, y, t), v(x, y, t))^T \in \mbR^2$ are two interactive
components, $D$ is the diffusion matrix, and $\phi(\mfu)$ is source
term for the reaction.
We fix the parameters to $\alpha=0.01,\beta=1.0,D=\begin{pmatrix}
    0.05 & 0    \\
    0 & 0.1
\end{pmatrix}$, where the system is known to form Turing patterns~\cite{murray2003mathematical}.
Assuming that we do not know the
exact form of the reaction term $\phi(\mfu)$,
the resolved part is the diffusion term and the unresolved part is the
non-linear reaction term.
The ground truth data is
calculated using the semi-implicit Crank-Nicolson scheme with fully explicit
discretization for the non-linear term.
Assuming we do not know that the reaction term is pointwise in space,
we use a convolutional neural network-based model to learn it
based on ordinary least squares (OLS).
Specifically, we solve the
least squares problem $\min_{\theta}\mbE_{\mfu}\norml \phi_{\theta}(\mfu) -
\phi(\mfu)\normr_2^2$ using Adam~\cite{kingma2014adam}. Moreover, we conduct an
ablation study where the unresolved model consists of the exact form of the
non-linear term as in~\cref{equ:RD}, plus a random Gaussian noise whose variance
is set equal to the optimization error of the OLS estimator, which is $10^{-4}$
in the following experiments.
By comparing this and the simulation
based on the OLS model,
we can identify the source of the accumulating trajectory error as either the estimation error of 
the unresolved model or the shift in the distribution of the input data fed to the unresolved model.
More details on the data generation and optimization procedure is described in subsection 2.1 and 2.2 of supplementary materials .
We simulate this PDE with a test initial condition
sampled from the same distribution as training trajectories and compare the
results with the ground truth. We present the
comparison in~\cref{RD-ds}.
\begin{figure}[ht]\label{RD-ds}
          \centering
          \centerline{\includegraphics[width=.6\linewidth]{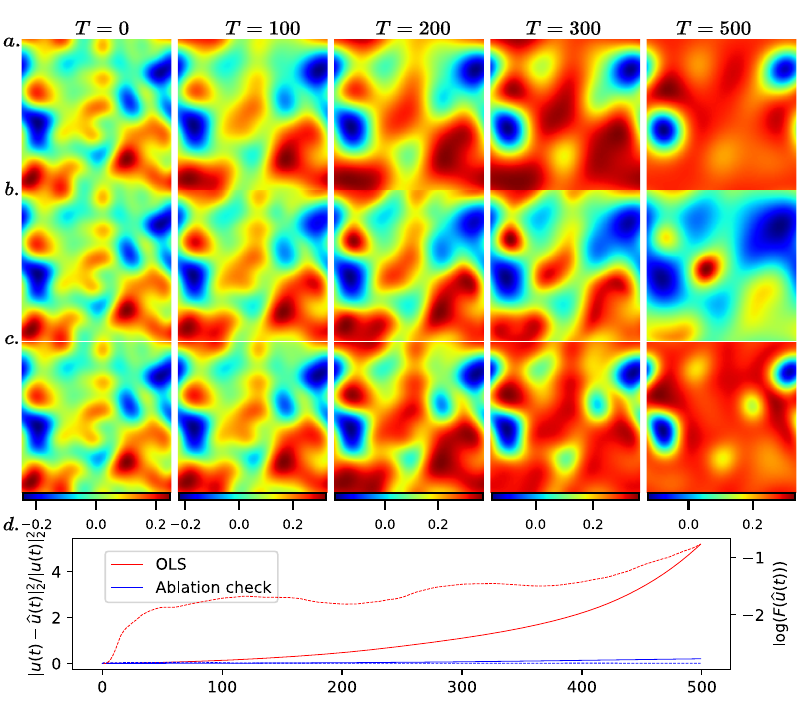}}
          \caption{The first three rows display the snapshots of
ground truth, simulated fields, and ablation study simulation
at time steps: 0, 100, 200, 300, and 500. (d) plots the relative error (solid line) and distribution shift (dash line) between two flow configurations at the corresponding time. 
The error is defined as $\frac{\norml \wht\mfu_t - \mfu_t \normr_2}{\norml \mfu_t \normr_2}$ at each time step $t$ instead of the whole trajectory as in \cref{error-def}.
The plotted distribution shift
roughly measures the average discrepancy between the distributions of $\mfu_k$ and
$\wht \mfu_k$ at time step $k$,
and is calculated using an autoencoder explained in \cref{alg-intro}. Although the OLS estimator and ablation-study estimator share the same error magnitude for unresolved models, their long-time 
trajectory performance forms sharp contrasts, indicating the optimization error of the unresolved model could not explain the failure in long-time trajectory prediction.}
\end{figure}

The first three rows display the snapshots of
ground truth, simulated fields, and ablation study simulation
at time steps: 0, 100, 200, 300, and 500. (d) plots the relative error (solid curve) and distribution shift (dotted curve) between two flow configurations at the corresponding time. The error 
is defined as $\frac{\norml \wht\mfu_t - \mfu_t \normr_2}{\norml \mfu_t \normr_2}$ at each time step $t$ instead of the whole trajectory as in \cref{error-def}.
The plotted distribution shift
roughly measures the average discrepancy between the distributions of $\mfu_k$ and
$\wht \mfu_k$ at time step $k$,
and is calculated using an autoencoder explained in \cref{alg-intro}.
As can be observed, it does not take a very long
time for the distribution shift issue to be severe enough using the OLS estimator. Comparing the OLS error and distribution shift with those of the
ablation study, we conclude that
the estimation error is not the main cause of the trajectory error,
since the ablation-study trajectory has the same magnitude of estimation
error, but has a much smaller trajectory error and distribution shift.
Moreover, in both cases the relative error of the trajectory has the same trend as the distribution shift, indicating correlations.

Indeed, this phenomenon is ubiquitous in modern
MLHS~\cite{wang2021physics, zhang2019active, stachenfeld2022learned}.
As a second example, we solve the Navier-Stokes equation \cref{NS} using
projection method \cref{projection} where the pressure solver is replaced by a learned convolutional neural network
predictor. The predictor is optimized over data pairs of fluid velocity and pressure obtained from high-fidelity simulation to attain an error of $10^{-5}$. Due to the complexity of the projection solver and 
grid issue, we do not conduct an ablation study in this case.
\begin{figure}[ht]\label{NS-ds}
    \centering
    \centerline{\includegraphics[width=.6\linewidth]{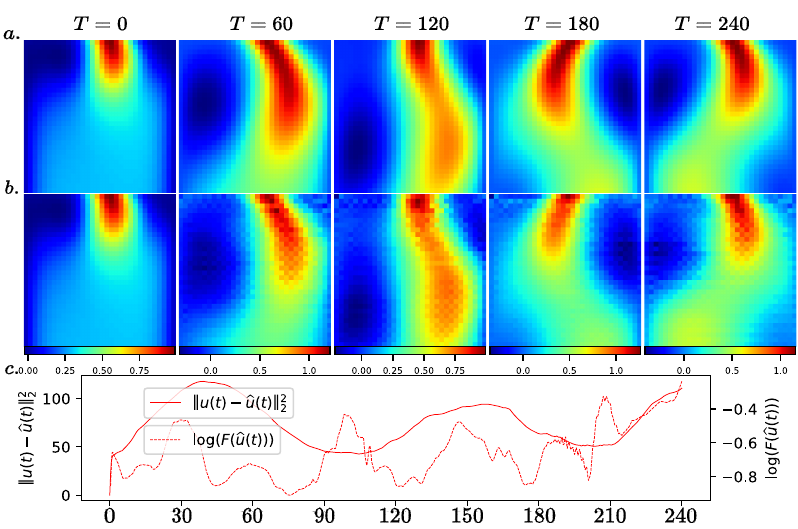}}
    \caption{
    Configurations of the Navier-Stokes equation simulation with Reynolds number $200$ at time step: 0, 60, 120, 180, 240: (a) ground truth (b) simulated fluid field using OLS estimator. 
    We only show the velocity field over part of the domain. The dynamic blows up in the last configuration, which does not show up in the case of reaction-diffusion equations. This
    can be explained by the fact that the Navier-Stokes equation is more unstable than reaction-diffusion equations. In (c), we illustrate the error
    along the trajectories, the distribution shift.
    }
\end{figure}
The distribution shift in \cref{NS-ds} turns out
to be more severe when we simulate the incompressible Navier-Stokes equation. After $160$ time steps, the fluid configuration
becomes rough and un-physical. Here, we again observe that the DS and error increase
together, suggesting correlations.
In the next section, we will make this connection precise,
and develop principled methods to control
the distribution shift, thereby improving the prediction fidelity.

\section{Theoretical analysis and algorithm}
\label{theory}
In this section, we first put the distribution shift issue into a theoretical framework. Then, we introduce and analyze our algorithm in this section. We will provide rigorous
study of the linear case and use this to motivate the algorithm.
\subsection{Distribution shift in linear dynamics and motivation of \algName}
As there exist few tools to tackle the general non-linear dynamics
\cref{non-linear}, we switch to the simpler case of
linear dynamics, which also capture some key features in general situations. We consider the
following the hybrid simulation problem
\bequ\label{linear}
    \begin{aligned}
        \frac{d\mfu}{dt} & = A\mfu + B\mfy,  \quad \mfu\in\mbR^m, \mfy \in \mbR^n, A\in \mbR^{m \times m}, B\in \mbR^{m \times n}   \\
        \mfy & = C^* \mfu,  \quad C^* \in \mbR^{n\times m}.
    \end{aligned}
\eequ
Such structures appear in many scenarios, e.g. finite difference solution
of discretized linear PDEs and linear control problems.
Alternatively, one can think
of this linear system as the linearization of the
non-linear dynamics~\cref{non-linear} about some steady-state.

We assume that the state variables $\mfu_i$s in training data come from several
trajectories and $\mfy_i$s may be subject to some random
measurement error, i.e. $\mfy_i = C^*\mfu_i + \epsilon_i$,
or in matrix form $Y = C^*\mfU + \epsilon$.
The key point is that these trajectories may belong to a low-dimensional
subspace in the high-dimensional state space.
This is the case for many scientific computing problems, e.g. parametric
elliptic PDE~\cite{bachmayr2017kolmogorov} and is
also related to the notions of Kolmogorov N-width and the manifold hypothesis
~\cite{fefferman2016testing}. Although transport-dominated problems with
low-decaying rates of the 
Kolmogorov N-width are difficult to approximate using linear reduced-order
models, a large amount of 
literature~\cite{Holmes_Lumley_Berkooz_Rowley_2012, holmes1997low} show
evidence of the existence of coherent structures in turbulence and their
low-dimensional nature. Moreover, 
recent work~\cite{eivazi2020deep} using nonlinear model reduction has achieved
efficient reduction over this type of problems.

We use $V \subset \mbR^m$ to denote the subspace that
contains all the training data $\mfu_i$. In this setting, the appearance of this
low-dimensional structure may be caused by two different situations. In the
first situation, the initial value of $\mfu$ is supported on the subspace
spanned by several eigenvectors of the evolution operator $e^{(A+BC^*)}$, i.e.
\begin{align}
    \mfu_0 \in \operatorname{span}\{\mfv_1, \mfv_2, \cdots, \mfv_l \}, \quad e^{(A+BC^*)}\mfv_1 = \eig_1 \mfv_1, \cdots, e^{(A+BC^*)}\mfv_l = \eig_l \mfv_l,
\end{align}
Then, all the training data $\mfu_i$ belong to this subspace, i.e. $V
= \operatorname{span}\{\mfv_1, \mfv_2, \cdots, \mfv_l \}$. The second situation
is that the dynamics is degenerate in the sense that the
evolution matrix $A$ is not of full rank.
This will not
appear in the differential formulation as infinitesimal transformations
in the form $e^{(A+BC^*)\Delta t}$ are always
non-degenerate. However, for a discrete dynamic $\mfu_{k+1} = A\mfu_k + B\mfy_k, \quad \mfy_k = C^* \mfu_k$,
if the matrix $A+BC^*$ is degenerate, then all the data $\mfu_k$ will belong to
the range of this operator $A+BC^*$, which is a proper subspace.
Most cases of scientific computing applications belong
to the first class.
Due to the degeneracy in the training data,
the least squares solution is not unique.
Hence, various empirical regularizations (e.g. $\ell^2$ regularization)
can be introduced to obtain an estimator with desirable properties.

Now, suppose one obtains an estimator $\wht C$, one can calculate
several error metrics.
The first is the test error
\begin{equation}\label{stats-error}
    l_{\OLS}(\wht C) = \mbE\norml (\wht C - C^*) \mfu \normr_2^2,
\end{equation}
where $\mfu$ is sampled from the low dimensional subspace $V$ and follows the
same distribution as the training data.
We refer to this as the statistical estimation error or a priori error.
However, we are interested in the a posteriori error
of the simulated dynamics
\bequ\label{simu-linear}
    \begin{aligned}
        \frac{d\wht\mfu}{dt}  = A\wht\mfu + B\wht\mfy,  \quad  \wht\mfy = \wht C \wht\mfu,
    \end{aligned}
\eequ
compared to the ground truth $\mfu$ with the same initial condition.
The time evolution of their difference is given by

\begin{equation}\label{decomp-linear}
    \begin{aligned}
        \frac{d}{dt} (\wht\mfu - \mfu) = & \ (A+B\wht C)\wht \mfu - (A+BC^*)\mfu      \\
        = & \  (A+B\wht C)\opP_V (\wht \mfu - \mfu) + B(\wht C - C^*)\mfu + (A+B\wht C)(\wht\mfu - \opP_V\wht \mfu),
    \end{aligned}
\end{equation}
where $\opP_V$ is the orthogonal projection onto the data subspace $V$, and we have
$\opP_V\mfu = \mfu$ since $\mfu$ belongs to the data subspace. As derived in \cref{decomp-linear}, the overall error of the trajectory during evolution
can be decomposed into three parts. In the following, we make a detailed
analysis of each term.

The first term can be viewed as an amplitude and stability factor of the error propagation of the simulated dynamics inside the data manifold $V$.
If we assume the ground truth dynamics is stable in $V$, while not necessary to be stable over the whole state space, then this term can be simply
bounded by the spectral information of the generator $(A+B\wht C)\opP_V$, see details in \cref{pf:a-posteriori-error}.

The second term is simply the statistical estimation error of the estimator
$\wht C$ defined in~\cref{stats-error}. Given that the initial
condition $\mfu_0$ of the training and test trajectories are sampled from the
same distribution, this term is well-bounded in classical statistical learning
theory~\cite{hastie2009elements}, provided the number of data tuples is
sufficiently large.
In particular, this error has nothing to do with the distribution shift.

Now we move to the last part $(A+B\wht C)(\wht \mfu - \opP_V \wht \mfu)$,
which can be more easily understood from a geometric viewpoint.
The last factor $(\wht \mfu - \opP_V \wht
\mfu)$ measures how far the simulated trajectory is away from the data subspace
$V$, i.e. $\norml \wht \mfu - \opP_V \wht \mfu \normr_2 =
\dist(\wht \mfu, V)$. In other words,
this term measures how far the simulated trajectory's distribution ``shifts''
from the true distribution.
This is exactly the term we want to control to maintain simulation fidelity.
This term forms a sharp
contrast with the two terms previously mentioned in the sense that it is not automatically
bounded in most algorithms for estimating $\wht C$.
We will illustrate this point
further by deriving an error bound of the simulated dynamics, showing its sensitive dependency on the
distribution shift term. Thus, we can improve a posteriori accuracy if we combat such distribution shifts.

Now, we can formally write the error propagation along the simulated dynamics as
\begin{equation}\label{equ:tr-error-prop}
    \begin{aligned}
        & \ \wht\mfu(T) - \mfu(T) \\
        = & \ \int_0^T e^{(A+B\wht C)\opP_V(T-t)} \lp B(\wht C - C^*)\mfu(t) +(A+B\wht C)(\wht \mfu(t) - \opP_V\wht\mfu(t))\rp dt.
    \end{aligned}
\end{equation}
The exponential factor $e^{(A+B\wht C)\opP_V(T-t)}$ corresponds to the first
term while two terms in the bracket that follows correspond to the second and
the third terms mentioned above. The distribution shift term in MLHS, i.e. the
second term in the bracket is far different from classical machine learning concepts
such as covariate shift and label shift. In computer vision, the cause of
distribution shift may be extrinsic~\cite{koh2021wilds}, i.e. the pictures in
training sets are all taken during the daytime while those for testing are all
taken at night.
Such shifts are hard to model, so their resolution
tends to depend on data augmentation and related techniques,
e.g. the Dagger algorithm~\cite{10.1145/3054912, pmlr-v15-ross11a},
where instead of changing the optimization
problem, the data source is modified. This shares many similarities with
adversarial training, which also adds more data to the training set to make the
prediction robust under adversarial attack, a typical distribution shift in the
area of computer vision~\cite{goodfellow2020generative}.
In contrast, in our setting, the distribution shift is intrinsically
driven by the hybrid simulation structure of which we have partial knowledge.
Hence, we can quantify this distribution shift and design specific
algorithms to mitigate them.

Returning to the linear problem,
in order to guarantee that the error $\wht \mfu - \mfu$ is bounded
over the simulated trajectory, a natural choice is to use a regularization for
$\norml \wht \mfu - \opP_V \wht \mfu \normr_2$. The most naive choice would be
$\norml \wht \mfu - \opP_V \wht \mfu \normr_2$ itself, but a problem is that $\wht \mfu$ is calculated via hybrid simulation until time $t$, which corresponds to a rather complicated computational graph.
This makes subsequent gradient-based optimization computationally expensive. Hence, we
take advantage of a one-step predictor, where
we set $\wht \mfu_{k+1}= \mfu_k+ (A\mfu_k + B\wht C\mfu_k)\Delta t$,
meaning that $\wht \mfu_{k+1}$ is calculated based on a single step of the simulated
dynamics from the ground truth solution $\mfu_k$.
Assuming $\mfu_k \in V$, one has
\begin{equation}
    \norml \wht \mfu_{k+1} - \opP_V \wht \mfu_{k+1} \normr_2 = \Delta t\norml \opP_{V^{\perp}}(A\mfu_k + B\wht C\mfu_k)\normr_2,
\end{equation}
and we may penalize the right-hand side to promote trajectory alignment with the data subspace.
Wrapping up all the ingredients, we state the loss function in the linear case as
\begin{equation}\label{reg-obj}
    \min_{C} l(C) := \min_C \mbE_{(\mfu, \mfy)} \lp \norml \mfy - C \mfu\normr_2^2 + \lambda \norml \opP_{V^{\perp}}(A + BC)\mfu \normr_2^2 \rp,
\end{equation}
and here $\lambda$ is a penalty strength parameter that can be
chosen during training.
We briefly discuss in subsection 2.5 of supplementary materials on choosing $\lambda$.

In the above discussion, we have assumed that the underlying data subspace $V$ is known a
priori, or we can calculate $\opP_{V^{\perp}}$ directly. However, in most cases, the
only information we have is a set of resolved state variables $\mfu_i$s. Then,
we can use standard dimension reduction methods such as principal component
analysis~\cite{hastie2009elements} to obtain a subspace $V$, which can be thought
of as an approximation of the data subspace. The loss objective \cref{reg-obj} can
thus be calculated.

\subsection{Algorithm in the general form}\label{alg-intro}

The discussion above applies to linear dynamics where the data also lies in a linear subspace.
In this subsection, we generalize it to non-linear cases and
state the general form of our algorithm.

Let us return to our original formulation of the task~\cref{non-linear},
where we have a dataset $\lbb (\mfu_1, \mfy_1, t_1),
(\mfu_2, \mfy_2, t_2), \cdots, (\mfu_N, \mfy_N, t_N )\rbb$.
As before, we assume that the resolved variable $\mfu$ lies on a $l$-dimensional manifold $\mcM \subset \mbR^m$ in
the configuration space and the estimator $\phi_{\theta}(\mfu, t)$ achieves high accuracy along this manifold while it has no guarantee outside the
manifold. The key difference between the general and the linear dynamics is that $\mcM$ may not be linear and we do not have a linear projection onto it.

To resolve this, we leverage an autoencoder to represent the
data manifold structure: we learn two neural networks
$E: \mbR^m \rightarrow \mbR^l$
(encoder) and $D: \mbR^l \rightarrow \mbR^m$ (decoder)
to represent the manifold. $E, D$ are
simultaneously optimized to minimize the reconstruction error
\begin{equation}\label{ED-opt}
    \min_{E, D}\frac{1}{N}\sum_{i=1}^N F^2(\mfu_i), \quad F(\mfu_i) := \norml \mfu_i - D(E(\mfu_i))\normr_2.
\end{equation}
Omitting optimization issues, we can assume that after the training, the function $F$ vanishes on $\mcM$ and a greater value of $F$ implies a greater deviation to the manifold. Thus $F$ can be used as an indicator of
distribution shift and is used in all the figures in this paper to quantify
it.

The nonlinear analog of the regularization term in \cref{reg-obj} should be the projection to the normal space $N_{\mfu_i}\mcM = T_{\mfu_i}^{\perp}\mcM$ at $\mfu_i$. However, constructing this projection operator would require a highly accurate optimized decoder map \cite{kvalheimshould} so that $dD_{\mfz_i}(\mfe_i), i=1,..., l$ forms a basis of the tangent space $T_{\mfu_i}\mcM$. Here $dD_{\mfu_i}$ is the differential of the mapping $D$ at $\mfz_i$ and $\{\mfe_i, i=1,...l\}$ is the standard basis of $\mbR^l$. To reduce the computational cost, we pick the normal vector $\nabla F(\mfu_i) \in N_{\mfu_i}\mcM$ and use $(\nabla F(\mfu_i))^T L(\mfu_i, \phi_{\theta}(\mfu_i), t_i)$ to measure the deviation of the velocity along this normal direction. The fact that $\nabla F(\mfu_i)$ belongs to the normal space of $\mcM$ at $\mfu_i$ directly follows from the assumption that $F$ vanishes on $\mcM$ after training. Moreover, we define $F(\mfu_i) := \norml \mfu_i - D(E(\mfu_i))\normr_2$ instead of $\norml \mfu_i - D(E(\mfu_i))\normr_2^2$ since the former has an $O(1)$ gradient even near $\mcM$ while the latter's gradient is of order $o(1)$ near $\mcM$ which may cause the regularization to be ineffective. The overall regularized loss function of our algorithm in general cases reads
\begin{equation}\label{ours}
    \min_{\theta} l(\theta) := \min_{\phi_{\theta}} \frac{1}{N}\sum_{i=1}^N \lp \norml \mfy_i - \phi_{\theta}(\mfu_i) \normr_2^2 + \lambda \lp \lp \nabla F(\mfu_i) \rp^T L(\mfu_i, \phi_{\theta}(\mfu_i), t_i)\rp^2 \rp.
\end{equation}
In the second term, $D, E$ are optimized in advance and frozen during the training of $\phi_{\theta}$. Since the loss function is regularized by a 
term that approximately measures the deviation of the velocity vector $L(\mfu_i, \phi_{\theta}(\mfu_i), t_i)$ 
from the tangent space of the data manifold, we name our algorithm ``MLHS with \algName''. The overall algorithm is summarized in \cref{alg-pseudocode}.

There is a subtle difference in our algorithm between linear and nonlinear
problems. While in linear case we can directly construct the projection
operators onto the orthogonal complement of the linear manifold
denoted by $\opP_{V^{\perp}}$, in nonlinear case each function 
$F(\mfu)$ only contributes one vector normal to $\mcM$. A technique to 
obtain more normal vectors in the nonlinear case is to have several $F_i$ composed by different autoencoders trained with different architectures or initializations. In this case, the regularization term will 
have the form
\begin{equation}
    \lambda \norml \lp \nabla \mfF(\mfu_i) \rp^T L(\mfu_i, \phi_{\theta}(\mfu_i), t_i)\normr_2^2 = \lambda \sum_{j=1}^K\lp \lp \nabla F_j(\mfu_i) \rp^T L(\mfu_i, \phi_{\theta}(\mfu_i), t_i)\rp^2.
\end{equation}
Here $K$ is number of normal directions chosen to be penalized with a maximum value $K_{\max} = m - l$. Notice that a greater $K$ also implies a greater computational and memory cost.
Throughout our experiment, we will only use one autoencoder to construct the regularization term as this already provides a 
significant improvement in the simulation accuracy.

\begin{algorithm}\label{alg-pseudocode}
\caption{MLHS with \algName}
\begin{algorithmic}[1]
\INPUT{$\lbb (\mfu_1, \mfy_1, t_1), \cdots, (\mfu_N, \mfy_N, t_N )\rbb$, resolved model, penalty strength $ \lambda$.}
\STATE{Learn a parameterized model which encodes the structure of training data, i.e. $F_{\eta}(\mfu) \geq 0, F_{\eta}(\mfu_k) = 0.$ }
\STATE{Freeze the parameters of this learned model.}
\STATE{Introduce another surrogate model $\phi_{\theta}(\mfu)$.}
\FOR{$k=1,2,\cdots, N$}
    \STATE{Predict the control variable $\wht \mfy_k = \phi_{\theta}(\mfu_k)$.}
    \STATE{Calculate the state variable after one-step iteration, i.e. $\wht \mfu_{k+1} = \mfu_{k} + \Delta t L(\mfu_k, \wht \mfy_k, t_k)$.}
    \STATE{Form the loss $l(\theta) = \mbE\lb \norml \mfy_k - \wht \mfy_k \normr_2^2 + \lambda \lp\lp \nabla F(\mfu_k) \rp^T L(\mfu_k, \wht \mfy_k, t_k)\rp^2 \rb$.}
    \STATE{Backpropogate to update $\theta$.}
\ENDFOR
\OUTPUT{\algName: $\phi_{\theta}$.}
\end{algorithmic}
\end{algorithm}

Compared to the literature on distributionally
robust optimization (DRO)
~\cite{rahimian2019distributionally} and numerical analysis of dynamical systems
~\cite{stuart1994numerical, stuart1998dynamical}, our work takes advantages from both.
First, the \algName \ belongs to the family of DRO in
the sense that it minimizes the data-driven module of the unresolved part over
some perturbations of the data distribution.
However, unlike the original DRO which
considers all possible perturbations of the data distribution
in its neighborhood under some metric, e.g. Wasserstein metric~\cite{blanchet2022confidence},
our regularization focus
on those perturbations caused by the resolved components of the dynamics.
These facilitate us to take advantage of the partial
knowledge of the dynamics.
As a result, the regularizer we proposed is more problem-specific. Moreover, from the perspective of DRO, our method takes into consideration 
the behavior of estimator under certain
perturbation normal to the
data manifold, which is much more tractable than guarding against
arbitrary perturbations in distribution space. In \cref{numerics}, we will implement other benchmarks with 
general regularizations and compare with our algorithm to illustrate the 
benefits of combining resolved model information into regularization.

Furthermore, we can understand our method through the lens of the stability analysis of
numerical methods for differential equations.
The classical result that the least squares estimator converges
to the ground truth in the limit of large datasets
can be understood as a consistency statement.
Here, we show that this is insufficient, and to
ensure convergence of the simulated trajectories
one needs to also promote stability,
and the regularizer proposed by us serves precisely this role.
This regularization approach should be contrasted with recent works~\cite{yu2021onsagernet,cranmer2020lagrangian,lin2020data,greydanus2019hamiltonian} on
learning dynamical models from data which builds stability
by specifying the model architecture.

\subsection{Interpretation and analysis of the algorithm}

Let us now quantify in the linear case, the gains of our algorithm over
the ordinary least squares estimator.
We first calculate the exact formula for two estimators.
Due to the low-rank structure, the OLS algorithm allows
infinitely many solutions.
Throughout this section, we make the common choice
of the minimum 2-norm solution. Let us first recall two important choices considered in this section will be the OLS estimator and \algName, i.e.
\begin{equation}\label{app-loss-func}
    \begin{aligned}
    C_{\OLS}  &:= \arg\min_C \frac{1}{N}\sum_{i=1}^N \norml \mfy_i - C \mfu_i\normr_2^2,  \\
    C_{\abbrvalgName}   &:= \arg\min_C \frac{1}{N}\sum_{i=1}^N \lp \norml \mfy_i - C \mfu_i\normr_2^2 + \lambda \norml \opP_{V^{\perp}}(A + BC)\mfu_i \normr_2^2 \rp.
    \end{aligned}
\end{equation}
The next proposition provides the explicit formula for them.
\begin{proposition}\label{estimator}
    Consider the linear dynamics \cref{linear}, the training data of the variable
    $\mfu$ is arranged into a data matrix $\mfU \in \mbR^{m \times N}$.
    Moreover, we assume the column space of $\mfU$ is contained in a subspace
    $V\subset \mbR^m$ with associated projection operator $\opP_V$.
    Then, the OLS estimator
    and \algName \ for the unresolved component $Y =
    C^* \mfU +\epsilon$ have the following form:
    \begin{equation}\label{estimator-formula}
    \begin{aligned}
    \wht C_{\OLS} = & \ C^* \opP_V + \epsilon \mfU^{\dagger},    \\
    \wht C_{\abbrvalgName} = & \ (\mfI + \lambda B^T \opP_{V^{\perp}} B)^{-1}(C^* \opP_V + \epsilon \mfU^{\dagger} - \lambda B^T \opP_{V^{\perp}} AP_{V}).
    \end{aligned}
\end{equation}
\end{proposition}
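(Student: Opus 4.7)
The proof is a direct computation of the normal equations, with two subtleties: the data matrix is rank-deficient (its column space is only $V$), so I need to commit to the minimum-norm solution, and the regularization term has to be rearranged using properties of the orthogonal projection $\opP_{V^\perp}$. The plan is to treat each estimator in turn, cast the loss in matrix form, and invoke the identity $\mfU\mfU^{\dagger} = \opP_V$ that follows from the assumption $\mathrm{col}(\mfU) = V$.

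For the OLS estimator, I would write the loss as $\tfrac{1}{N}\|Y - C\mfU\|_F^2$, where $Y = C^*\mfU + \epsilon$, and take the matrix derivative with respect to $C$ to obtain the normal equation $C\mfU\mfU^T = Y\mfU^T$. Since $\mfU\mfU^T$ is singular when $V \subsetneq \mbR^m$, I select the minimum-norm solution $\wht C_{\OLS} = Y\mfU^{\dagger}$. Substituting $Y = C^*\mfU + \epsilon$ and using $\mfU\mfU^{\dagger} = \opP_V$ (the orthogonal projector onto the column space of $\mfU$, which by hypothesis is $V$) yields the stated formula $\wht C_{\OLS} = C^*\opP_V + \epsilon\mfU^{\dagger}$.

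For the tangent-space regularized estimator, the loss in matrix form is
\begin{equation*}
\tfrac{1}{N}\|Y - C\mfU\|_F^2 + \tfrac{\lambda}{N}\|\opP_{V^\perp}(A+BC)\mfU\|_F^2.
\end{equation*}
Differentiating with respect to $C$ and using $\opP_{V^\perp}^T\opP_{V^\perp} = \opP_{V^\perp}$ (since $\opP_{V^\perp}$ is an orthogonal projection) gives the normal equation
\begin{equation*}
(C\mfU - Y)\mfU^T + \lambda B^T\opP_{V^\perp}(A+BC)\mfU\mfU^T = 0.
\end{equation*}
Collecting the terms containing $C$ on the left yields
\begin{equation*}
(\mfI + \lambda B^T\opP_{V^\perp}B)\,C\,\mfU\mfU^T = (Y - \lambda B^T\opP_{V^\perp}A\mfU)\mfU^T.
\end{equation*}
The matrix $M := \mfI + \lambda B^T\opP_{V^\perp}B$ is symmetric positive definite (a PSD matrix plus the identity), hence invertible, so the equation reduces to $C\mfU\mfU^T = M^{-1}(Y - \lambda B^T\opP_{V^\perp}A\mfU)\mfU^T$. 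Taking the minimum-norm solution by right-multiplying the right-hand side of the characterization $C = M^{-1}(Y - \lambda B^T\opP_{V^\perp}A\mfU)\mfU^{\dagger}$, and again applying $\mfU\mfU^{\dagger} = \opP_V$ together with $Y\mfU^{\dagger} = C^*\opP_V + \epsilon\mfU^{\dagger}$, produces the claimed expression for $\wht C_{\abbrvalgName}$.

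The only nontrivial step is justifying the selection of the minimum-norm solution and the passage from $C\mfU\mfU^T = \cdots$ to $C = \cdots \mfU^{\dagger}$; this is the standard Moore--Penrose identity applied row-wise (equivalently, the fact that $\mfU^{\dagger} = \mfU^T(\mfU\mfU^T)^{\dagger}$ and $\mathrm{row}(\mfU^{\dagger}) \subseteq \mathrm{col}(\mfU^T)$, so adding any element of $\ker(\mfU^T)$ strictly increases the Frobenius norm of $C$). Everything else is linear algebra, so I would keep the presentation compact and defer the pseudoinverse identity to a single sentence.
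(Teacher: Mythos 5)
Your proposal is correct and follows essentially the same route as the paper's proof: write both losses in Frobenius-norm matrix form, derive the normal equations, use $\opP_{V^{\perp}}^T\opP_{V^{\perp}}=\opP_{V^{\perp}}$, and resolve the rank deficiency via the minimum-norm solution together with $\mfU\mfU^{\dagger}=\opP_V$. Your explicit justification of the minimum-norm selection (via $\mfU^{\dagger}=\mfU^T(\mfU\mfU^T)^{\dagger}$ and the row-space argument) is a small point the paper leaves implicit, but the argument is otherwise identical.
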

Specifically, in the noiseless scenarios, $\wht C_{\OLS}$ recovers $C^*$ on the subspace $V$ but vanishes on its orthogonal complement. To interpret our estimator, let us consider a 
simpler case with $B = \mfI, A = \mathbf{0}$. This corresponds to the dynamic $\p_t \mfu = C^*\mfu$,
and the regularized estimator is given by
\begin{equation}
    \wht C_{\abbrvalgName} = (\mfI + \lambda \opP_{V^{\perp}})^{-1}(C^* \opP_V + \epsilon \mfU^{\dagger})= (\mfI + \lambda \opP_{V^{\perp}})^{-1}\wht C_{\OLS}.
\end{equation}
Therefore, our estimator performs a weighted least squares regression which penalizes
the direction perpendicular to the data subspace $V$, controlled by $\lambda$.
As the error along this normal direction is the main cause of the distribution shift,
it is sensible to reduce it more than the error along the tangent direction,
which causes deviations inside the manifold.
This shows that the \algName \ does mitigate distribution shift.

\begin{proof}[Proof of \cref{estimator}]
We solve the linear regression problem of the unresolved component analytically. Recall that the solution for 
the $\min_C \norml Y - CU \normr_{\rm F}^2$ is given by $\wht C = YU^{\dagger}$ 
where $U^{\dagger}$ is the pseudo-inverse of the matrix $U$. Consequently,
assume $Y$ is noiseless, i.e. $Y = C^* U$, we have $\wht C = C^* U U^{\dagger}$.
Let us assume the number of data samples is greater than the dimension of the
state, i.e. $U  \in \mbR^{n \times N}, n < N$. Then, if $U$ is full rank, i.e.
$\rank U = n$, the product $U U^{\dagger}$ is exactly  the identity matrix and
$\wht C$ thereby recovers the $C^*$ exactly. However, in our setting, the data
is supported on a low dimensional subspace and the data matrix is
rank-deficient. Therefore, $U U^{\dagger} = \opP_V$,
where $V$ is the column space of the data matrix $U$ and also the support set of it. We have $\wht C = C^* \opP_V$
in this case. Similarly, in the noisy case $Y = C^*U + \epsilon$, we have
\begin{equation}
    \wht C_{\OLS} = C^* \opP_V + \epsilon U^{\dagger}.
\end{equation}
This is the OLS estimator of the unresolved component.

Next, we calculate the formula for \algName. The loss function can be reduced to
\begin{equation}
    \begin{aligned}
    & \ \norml Y - C\mfU \normr_{\rm F}^2 + \lambda \norml \opP_{V^{\perp}}(A + BC)\mfU \normr_{\rm F}^2            \\
    = & \ \Tr[(Y-C\mfU)^T(Y-C\mfU)] + \lambda \Tr[(\opP_{V^{\perp}}(A + BC)\mfU)^T(\opP_{V^{\perp}}(A + BC)\mfU)]   \\
    = & \ \Tr[(Y-C\mfU)^T(Y-C\mfU)] + \lambda \Tr[\mfU^T(A + BC)^T\opP_{V^{\perp}}(A + BC)\mfU],
    \end{aligned}
\end{equation}
where we use the fact $\opP_{V^{\perp}}^T \opP_{V^{\perp}} = \opP_{V^{\perp}}$ since $\opP_{V^{\perp}}$ is a projection. Consequently, the first-order condition is given by
\begin{equation}
    C\mfU\mfU^T -Y\mfU^T + \lambda B^T \opP_{V^{\perp}}(A+BC) \mfU\mfU^T) = 0.
\end{equation}
We finally derive our estimator as
\begin{equation}
    \begin{aligned}
    \wht C_{\abbrvalgName} = & \ (\mfI + \lambda B^T \opP_{V^{\perp}} B)(Y\mfU^T - \lambda B^T \opP_{V^{\perp}} A\mfU\mfU^T)(\mfU\mfU^T)^{\dagger}     \\
    = & \ (\mfI + \lambda B^T \opP_{V^{\perp}} B)^{-1}(C^* \opP_V + \epsilon U^{\dagger} - \lambda B^T \opP_{V^{\perp}} AP_{V}).
    \end{aligned}
\end{equation}
\end{proof}

\begin{remark}
    Unlike classical $L^1$ or $L^2$-regularization, the tangent-space
    regularization does not have the issue of bias-variance trade-off,
    as both the least square and the regularization terms admit a common
    minimizer $C^*$. This is easily observed by plugging in the ground truth 
    $C^*$ to the formula of the regularizer, giving
    \begin{equation}
        \norml \opP_{V^{\perp}}(A + BC^*)\mfu_i \normr_2^2 = 0,
    \end{equation}
    since $\mfu_i$ lies in the invariant subspace $V$ of the generator of the dynamics $A+BC^*$. Therefore, increasing $\lambda$ will mitigate the effect of distribution shift, while at the same time 
    it does not deteriorate the performance of  the least squares problem.
\end{remark}

Next, we move on to analyze the accuracy gain of the whole
simulation algorithm by proving a bound for the error
of the simulation trajectory.
Denote the statistical error of \algName \ by
\begin{equation}\label{equ:tr-error}
    l_{TR}(\wht C) = \mbE\lp \norml (C^* - \wht C) \mfu\normr_2^2 + \lambda \norml \opP_{V^{\perp}}(A + B\wht C)\mfu \normr_2^2 \rp.
\end{equation}
We define the following function which helps simplify the notation
\begin{equation}
    Q_m(r, T) = m^2\int_0^T (2 + t^{m-1})e^{r t}dt \leq 3m^2 (1\vee T^{m})(1\vee e^{rT}), \quad T > 0,
\end{equation}
where $a \vee b := \max(a, b).$
In all the discussions that follow, we use $\eig_{\max}(A)$ to denote the greatest real part of the eigenvalues of $A$, i.e.
\begin{equation}
    \eig_{\max}(A) = \max\{\Re(s): \exists v \neq \mathbf{0}, Av = sv\}.
\end{equation}
As there are several assumptions for the theorem, we state it here for clarity.
\begin{assumption}\label{ass:theorem}
    We assume both the OLS estimator and $\abbrvalgName$ estimator $\wht C_{\OLS}$, $\wht C_{TR}$ are optimized to
    have error bounded by $\delta$ so that $l_{\OLS}(\wht C_{\OLS}), l_{TR}(\wht
    C_{TR}) < \delta$ in \cref{stats-error} and \cref{equ:tr-error}.
    We assume also that
    $\norml \opP_{V^{\perp}}(A+B\wht C_{\abbrvalgName})\opP_V \normr_2$ $ < \sqrt{\delta/\lambda}$.
\end{assumption}
Without the second assumption, the error bounds in \cref{thm:a-posteriori-error} hold with high probability via the observation
\begin{equation}
    \opP_{V^{\perp}}(A+B\wht C_{\abbrvalgName})\opP_V \mfu = \opP_{V^{\perp}}B(\mfI + \lambda B^T \opP_{V^{\perp}} B)^{-1}\epsilon \mfU^{\dagger},
\end{equation}
and classical theory of linear regression~\cite{hastie2009elements} if the training data has variance $\Sigma \geq c\mfI, c>0$. In \cref{ass:theorem}, we include this 
assumption to simplify the proof.

Moreover, we define several notations to simplify 
the derivation. Let $P_1, P_2, P_3$ be matrices which
transform $A+B\wht C$, $(A+B\wht C)\opP_V$, and $\opP_{V^{\perp}}(A+B\wht C)$ into their Jordan normal forms respectively and
\begin{equation}
    \begin{aligned}
        c_1 & = \cond(P_1), &&c_2 = \cond(P_2), && c_3 = \cond(P_3),       \\
        e_1 & = \eig_{\max}(A+B\wht C), && e_2 = \eig_{\max}((A+B\wht C)\opP_V), && e_3 = \eig_{\max}(\opP_{V^{\perp}}(A+B\wht C)),
    \end{aligned}
\end{equation}
where $\cond(P)$ denotes the condition number of matrix $P$.
\begin{theorem}\label{thm:a-posteriori-error}
    Under the same setting as \cref{estimator} and \cref{ass:theorem}, suppose the
    true trajectory is simulated from an initial condition $\mfu(0)\in V$ which
    follows the same distribution of the training data. Then, the errors of OLS
    and our algorithm is bounded respectively by
    \begin{equation}\label{equ:a-posterior-error}
        \begin{aligned}
            & \ \mbE\norml \wht\mfu_{\OLS}(T) - \mfu(T) \normr_2 \leq c_1\sqrt{\delta}\norml B \normr_2Q_m(e_1, T),      \\
            & \ \mbE\norml \wht\mfu_{\abbrvalgName}(T) - \mfu(T) \normr_2 \leq c_2\sqrt{\delta}\Big(\norml B \normr_2 Q_m(e_2, T) \\
        & \qquad  + \frac{9m^4c_3}{\sqrt{\lambda}}\lp 1 + 3m^2c_1\sqrt{\delta}\norml B \normr_2\rp\norml A+B\wht C \normr_2 (1\vee T^{3m})(1\vee e^{e_1T}) \Big).
        \end{aligned}
    \end{equation}
\end{theorem}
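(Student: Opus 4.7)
The proof plan is to represent $\wht\mfu(T)-\mfu(T)$ via the Duhamel formulas already derived in \cref{decomp-linear} and \cref{equ:tr-error-prop}, and then bound each factor using a uniform Jordan-form bound on matrix exponentials, namely $\norml e^{Ms}\normr_2 \leq c\, m(1+s^{m-1})e^{\eig_{\max}(M)s}$ where $c$ is the condition number of the Jordan transition matrix of $M$. This inequality is precisely what the auxiliary $Q_m(r,T) = m^2\int_0^T(2+t^{m-1})e^{rt}\,dt$ is tailored to integrate cleanly, so it will be the workhorse at each level.

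For $\wht C_{\OLS}$ the cleanest route is the direct Duhamel identity
\begin{equation*}
\wht\mfu_{\OLS}(T)-\mfu(T)=\int_0^T e^{(A+B\wht C_{\OLS})(T-t)}B(\wht C_{\OLS}-C^*)\mfu(t)\,dt.
\end{equation*}
Applying the Jordan-form bound with constants $(c_1,e_1)$ to the propagator and using $\mbE\norml (\wht C_{\OLS}-C^*)\mfu(t)\normr_2 \leq \sqrt\delta$ from \cref{ass:theorem} (applicable because $\mfu(t)\in V$ inherits the training distribution along the true trajectory), Jensen's inequality and integration in $t$ absorb the polynomial and exponential into $Q_m(e_1,T)$, yielding the first asserted bound $c_1\sqrt\delta\norml B\normr_2 Q_m(e_1,T)$.

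For $\wht C_{\abbrvalgName}$ I would keep the finer decomposition \cref{equ:tr-error-prop}. The statistical-error summand is handled exactly as above but with the semigroup $e^{(A+B\wht C_{\abbrvalgName})\opP_V(T-t)}$, producing the term $c_2\sqrt\delta\norml B\normr_2 Q_m(e_2,T)$. The substantive work is controlling the distribution-shift summand $(A+B\wht C_{\abbrvalgName})\opP_{V^{\perp}}\wht\mfu(t)$: since $\opP_{V^{\perp}}\wht\mfu(0)=0$, a second variation-of-constants applied to the dynamics of $\opP_{V^{\perp}}\wht\mfu$ gives
\begin{equation*}
\opP_{V^{\perp}}\wht\mfu(t)=\int_0^t e^{\opP_{V^{\perp}}(A+B\wht C_{\abbrvalgName})(t-s)}\opP_{V^{\perp}}(A+B\wht C_{\abbrvalgName})\opP_V \wht\mfu(s)\,ds,
\end{equation*}
in which the middle factor is bounded by $\sqrt{\delta/\lambda}$ directly from \cref{ass:theorem}. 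The remaining $\norml \opP_V\wht\mfu(s)\normr_2$ is then split as $\norml\mfu(s)\normr_2 + \norml\wht\mfu(s)-\mfu(s)\normr_2$, and the second summand is estimated by the same OLS-style argument applied to $\wht C_{\abbrvalgName}$; this is what spawns the multiplicative factor $1+3m^2c_1\sqrt\delta\norml B\normr_2$ in the stated bound.

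I expect the main technical difficulty to be the clean bookkeeping of three compounded Jordan-form bounds: the outer propagator $e^{(A+B\wht C_{\abbrvalgName})\opP_V}$ (contributing $c_2,e_2,T^m$), the inner orthogonal propagator $e^{\opP_{V^{\perp}}(A+B\wht C_{\abbrvalgName})}$ (contributing $c_3,e_3,T^m$), and the auxiliary bound on $\norml\wht\mfu(s)\normr_2$ via $e^{(A+B\wht C_{\abbrvalgName})s}$ (contributing $c_1,e_1,T^m$). Multiplying these, one must majorize the three polynomial factors into the compact form $(1\vee T^{3m})$ and replace the combined exponential by the single dominant $(1\vee e^{e_1 T})$, while the various $m$ factors and the numerical constants produced by termwise integration of $Q_m$ collapse into the advertised $9m^4$ prefactor. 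A minor but essential subtlety is that both clauses of \cref{ass:theorem} --- the loss bound and the operator bound $\norml\opP_{V^{\perp}}(A+B\wht C_{\abbrvalgName})\opP_V\normr_2<\sqrt{\delta/\lambda}$ --- are precisely what one needs to close the chain of inequalities deterministically, bypassing any probabilistic concentration argument.
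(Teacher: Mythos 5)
Your proposal follows essentially the same route as the paper's proof: the same Duhamel representation for the OLS error, the same nested variation-of-constants formula for $\opP_{V^{\perp}}\wht\mfu$, the same Jordan-form bound on matrix exponentials feeding into $Q_m$, and the same use of both clauses of \cref{ass:theorem}. The only (cosmetic) difference is that you extract the operator norm $\norml \opP_{V^{\perp}}(A+B\wht C)\opP_V\normr_2\leq\sqrt{\delta/\lambda}$ before splitting $\opP_V\wht\mfu$ into $\mfu$ plus the trajectory error, whereas the paper splits first and bounds the $\opP_{V^{\perp}}(A+B\wht C)\mfu$ piece via the regularization term of the loss; these coincide under the stated distributional assumption on $\mfu(t)$.
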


Generally speaking, the exponential factor $e_1, e_2$ in $Q_m$ may make both bounds loose, especially for large $T$. This is
inevitable as our bounds deal with worst-case scenarios and exponential growth is common in unstable dynamics. 
If otherwise the dynamic is stable, $Q_m$ will decay exponentially and the bound becomes more effective.

Comparing the two bounds, the first difference is the exponents appearing in the $Q$-function, i.e. in OLS it is $e_1=\eig_{\max}(A+B\wht C)$
while in our algorithm it is $e_2=\eig_{\max}((A+B\wht C)\opP_V)$. This is consistent with our expectation as the data-driven surrogate can only be trusted in the subspace $V$. Therefore,
if the ground truth dynamics is unstable in the orthogonal complement $V^{\perp}$ while stable in $V$, meaning that $e_1 > 0 > e_2$ the error of OLS 
will be dominated by the error made in $V^{\perp}$.
For tangent-space regularization, the better exponent $e_2$ clearly mitigates this. This is also observed quantitatively in \cref{fig:linear-cmp}: comparing the error propagations 
of the $\abbrvalgName$ with $\lambda=10^7$ with OLS, the rates are very different at the beginning with $\abbrvalgName$ close to $e_2$ while OLS close to $e_1$. However, we need to
be careful with the additional terms appearing in the second bound that may have a big $e_1$ exponent which may lead
to the blow-up of the error. However, it also contains an extra factor $\sqrt{\frac{1}{\lambda}}$ and will be much smaller than the counterpart appearing in OLS bound as long as the 
penalty $\lambda$ is large enough.
This is justified from \cref{fig:linear-cmp},
where we observe $\abbrvalgName$ trajectories with increasing $\lambda$s spend more time with a slower rate $e_2$, and then saturate to the $e_1$-rate.

\begin{figure}[ht]\label{fig:linear-cmp}
    \centering
    \centerline{\includegraphics[width=.6\linewidth]{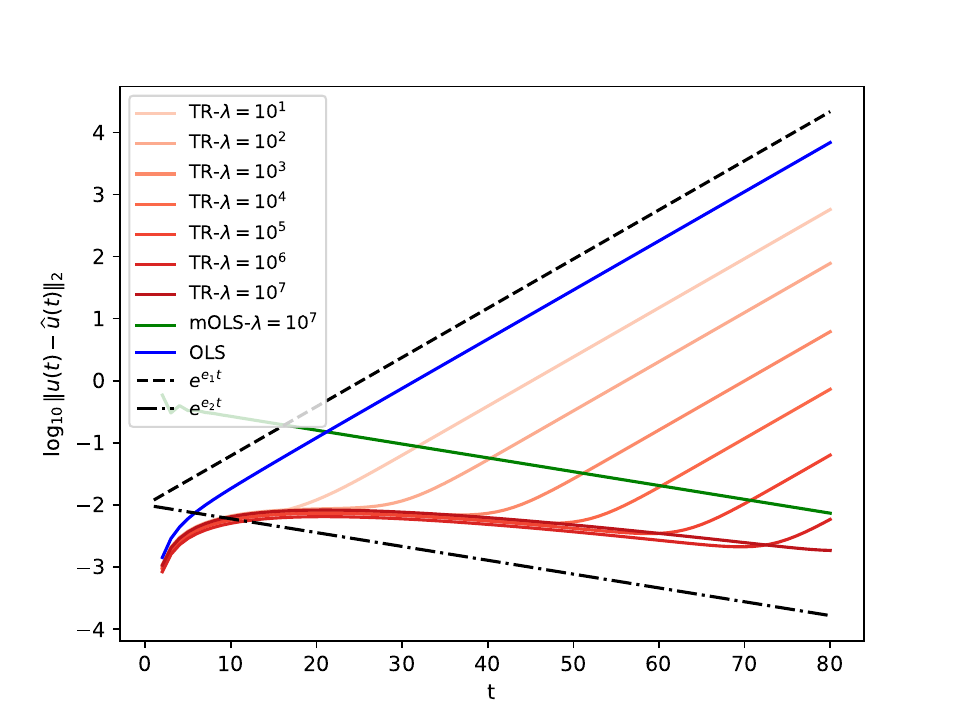}}
    \caption{We compare the simulation error of various estimators in a synthetic linear dynamic where
    the data subspace $V(V^{\perp})$ is the stable (unstable) manifold. This quantitatively supports our theory from three perspectives: 
    1). The trajectory error of the OLS simulator coincides with the rate $e_1$ over the unstable manifold; 2). The trajectory error of our algorithm has two stages, 
    i.e. $e_2$-rate at the beginning and $e_1$ later, which correspond exactly to two parts in \cref{equ:a-posterior-error}. 3). As $\lambda\rightarrow\infty$, the length of the 
    first stage increases, meaning a smaller simulation error.}
\end{figure}

\begin{proof}[Proof of \cref{thm:a-posteriori-error}]\label{pf:a-posteriori-error}
In the proof, we will use $\norml \cdot \normr$ to denote 2-norm for simplification. We start with the error propagation equation which does not take into consideration the low-dimensional 
structure in the data and dynamics.
\begin{equation}\label{equ:error-prop-1}
    \begin{aligned}
        \frac{d}{dt} \lp \wht\mfu - \mfu \rp
        = & \ \lp A+B\wht C\rp\lp\wht \mfu - \mfu\rp + B\lp\wht C - C^*\rp\mfu,
    \end{aligned}
\end{equation}
which can be solved exactly to obtain 
\begin{equation}
    \wht\mfu(T) - \mfu(T) = \int_0^T e^{(A+B\wht C)(T-t)} B(\wht C - C^*)\mfu(t)dt.
\end{equation}
Therefore, using the bound (proof in supplementary materials) that for $M \in \mbR^{m\times m}$, $\norml e^{Mt} \normr \leq \cond(P)m^2(2 + t^{m-1})e^{\eig_{\max}(M) t}$,
we obtain
\begin{equation}\label{equ:ols-error}
    \begin{aligned}
        \mbE\norml \wht\mfu_{\OLS}(T) - \mfu(T) \normr \leq & \ \int_0^T \norml e^{(A+B\wht C)(T-t)} \normr \norml B \normr \mbE\norml (\wht C - C^*)\mfu(t) \normr dt      \\
        \leq & \ c_1\sqrt{\delta}\norml B \normr Q_m(e_1, T) \lesssim \sqrt{\delta}Q_m(e_1, T).
    \end{aligned}
\end{equation}
Notice that this bound also holds for $\mfu_{\abbrvalgName}$ as it only requires 
$\mbE\norml (C_{\abbrvalgName} - C^*)\mfu(t) \normr < \delta$ which is guaranteed by our assumption for $l_{\abbrvalgName}$.
Next, for tangent-space regularization, we need to take into account the existence of the low-dimensional subspace $V$ and the projection operator $\opP_V$. Recall the decomposition of the error propagated 
dynamics
\begin{equation}
    \begin{aligned}
        \frac{d}{dt} (\wht\mfu - \mfu) = (A+B\wht C)\opP_V (\wht \mfu - \mfu) + B(\wht C - C^*)\mfu + (A+B\wht C)(\wht\mfu - \opP_V\wht \mfu),
    \end{aligned}
\end{equation}
and the integration formula for the propagation error \cref{equ:tr-error-prop}. We need to control the term $\opP_{V^{\perp}}\wht\mfu(t) = \wht \mfu(t) - \opP_V\wht\mfu(t)$ originating 
from the subspace decomposition.
\begin{equation}
    \begin{aligned}
        \frac{d}{dt}\opP_{V^{\perp}}\wht\mfu  
        = & \ \opP_{V^{\perp}}(A+B\wht C)\opP_{V^{\perp}} \wht\mfu  + \opP_{V^{\perp}}(A+B\wht C)\opP_V\lp \wht\mfu - \mfu \rp + \opP_{V^{\perp}}(A+B\wht C)\mfu,
    \end{aligned}
\end{equation}
whose solution can be organized as 
\begin{equation}
    \begin{aligned}
        & \ \opP_{V^{\perp}}\wht\mfu(T) \\
        = & \ \int_0^T e^{\opP_{V^{\perp}}(A+B\wht C)(T-t)} \lp \opP_{V^{\perp}}(A+B\wht C)\opP_V\lp \wht\mfu(t) - \mfu(t) \rp + \opP_{V^{\perp}}(A+B\wht C)\mfu(t) \rp dt.
    \end{aligned}
\end{equation}
Now, using the \cref{ass:theorem}, one has
\begin{equation}
    \begin{aligned}
        & \ \mbE\norml \opP_{V^{\perp}}\lp \wht\mfu_{\abbrvalgName}(T)\rp \normr \\
        \leq & \ c_3\sqrt{\frac{\delta}{\lambda}} \lp Q_m(e_3, T) + m^2\int_0^Te^{(T-t)e_3}(2+(T-t)^{m-1}) \mbE\norml \wht\mfu_{\abbrvalgName}(t) - \mfu(t) \normr  \rp    \\
        \leq & \ c_3\sqrt{\frac{\delta}{\lambda}}\lp 3m^2 (1\vee T^{m})(1\vee e^{e_3T}) + 9m^4c_1\sqrt{\delta}\norml B \normr (1\vee T^{2m})(1\vee e^{e_1T}) \rp       \\
        \leq & \ 3m^2c_3\sqrt{\frac{\delta}{\lambda}}\lp 1 + 3m^2c_1\sqrt{\delta}\norml B \normr\rp(1\vee T^{2m})(1\vee e^{e_1T}).
    \end{aligned}
\end{equation}
In the third line, we use the bound \cref{equ:ols-error}.
Now we plug in the bounds of $\opP_{V^{\perp}} \wht\mfu_{\abbrvalgName}(T)$ to the error propagation equation and obtain the following bound
\begin{equation}\label{equ:tr-alg-error}
    \begin{aligned}
        & \ \mbE\norml \wht\mfu_{\abbrvalgName}(T) - \mfu(T) \normr \leq c_2\sqrt{\delta}\Big(\norml B \normr Q_m(e_2, T) \\
        & \qquad  + \frac{9m^4c_3}{\sqrt{\lambda}}\lp 1 + 3m^2c_1\sqrt{\delta}\norml B \normr\rp\norml A+B\wht C \normr (1\vee T^{3m})(1\vee e^{e_1T}) \Big).
    \end{aligned}
\end{equation}
\end{proof}

\begin{remark}
    Recall our definition of the distribution shift in \cref{def:ds}. In the proof, assuming $e_1 > 0 > e_2$,
    we obtain the following estimates of the simulation error of OLS and our algorithm:
    \begin{equation}
        \begin{aligned}
            \mbE\norml \opP_{V^{\perp}}\lp \wht\mfu_{\OLS}(T)\rp \normr & = O((1\vee T^{m})(1\vee e^{e_1T})), \\
            \mbE\norml \opP_{V^{\perp}}\lp \wht\mfu_{\abbrvalgName}(T)\rp \normr & = O(\frac{(1\vee T^{3m})(1\vee e^{e_1T})}{\sqrt{\lambda}}).
        \end{aligned}
    \end{equation}
    Therefore, if we define
    \begin{equation}
        d(\rho, \wht \rho) = \Big| \mbE\norml \opP_{V^{\perp}}\mfu \normr - \mbE\norml \opP_{V^{\perp}}\wht \mfu \normr \Big|, \quad \mfu \sim \rho, \wht \mfu \sim \wht \rho,
    \end{equation}
    we conclude that the OLS algorithm suffers from distribution shift as $\opP_{V^{\perp}}\mfu = \mathbf{0}$ while our algorithm can mitigate this issue by choosing a large penalty 
    $\lambda$, which is also observed quantitatively in \cref{fig:linear-cmp}.
    Although this constant may be exponentially large over $T$ in unstable linear dynamics, our numerical experiments in the next section imply that $\lambda$ needs not to be very 
    large practically.
\end{remark}

\section{Numerical experiments}\label{numerics}

In this section, we present numerical experiments which illustrate the
effect of the tangent-space regularized algorithm presented in~\cref{alg-pseudocode}.
We use the experiments to illustrate two phenomena.
The first demonstrates the effectiveness of our algorithm, which
we test over reaction-diffusion equations and the incompressible
Navier-Stokes equation, and compare with benchmark algorithms using the least
squares, as well as simple regularizations that do not account
for the structure of the resolved dynamics or the data manifold.
For the second goal, we show that our method brings more significant
improvements in problems with severe distribution shifts.
This is demonstrated on the same family of PDEs, i.e. reaction-diffusion equations with
varying diffusion coefficients and the Navier-Stokes equation with different Reynolds numbers.
These varying parameters can be understood - as we show numerically - as
quantitative indicators of the severity of distribution shift. The implementation of our method and experiment reproduction is found in
the repository~\cite{code-repo}.

\subsection{Tangent-space regularized algorithm improves simulation accuracy}

We first show that the proposed algorithm can be advantageous in several prototypical machine-learning augmented scientific computing applications.

In the first experiment, we use the FitzHugh-Nagumo reaction-diffusion equation \cref{equ:RD} as the test case of our algorithm. The resolved-unresolved structure is the 
coarse-fine grid correction structure discussed in \cref{sec:examples}. This helps us obtain high-fidelity simulation results by combining a low-fidelity numerical solver with a 
fast neural network surrogate model for correction. Here we choose 
$\alpha=0.01,\beta=1.0,D=\begin{pmatrix}
    \gamma & 0    \\
    0 & 2\gamma
\end{pmatrix}, \gamma \in \{0.05, 0.10, 0.15, 0.20, 0.25\}$ in the simulation which guarantees the existence of Turing patterns. The initial condition $\mfu_0$ is generated by i.i.d. sampling from a normal
distribution $u(x, y, 0), v(x, y, 0) \sim \mcN(0, 1)$.
This system is known to have dynamic spatial patterns where initial smaller
clusters gradually transform and merge to form bigger clusters according to the balance between diffusion and reaction effects~\cite{murray2003mathematical}. In literature, the importance of the 
sampling procedure is also emphasized, and various adaptive sampling methods based on error and particle methods are studied in~\cite{doi:10.1137/22M1527763, wen2023coupling, zhao2022adaptive}.

Now, we explain how the training data is prepared. Recall that in
\cref{coarse-fine} we discussed the hybrid simulation structure for
 a correction procedure under a pair of coarse and fine mesh:
\bequ
	\lbb\begin{aligned}
		\mfu_{k+1}^{2n} & = L(\mfu_k^{2n}, \mfy_k^{2n}) = I_n^{2n} \circ f_n(R_{2n}^{n}(\mfu_k^{2n})) + \mfy_k^{2n},		\\
		\mfy_k^{2n} & = \phi(\mfu_k^{2n}).
	\end{aligned}\right.
\eequ
The mapping $\phi$ between the fine grid state $\mfu_k^{2n}$ to the correction term $\mfy_k^{2n}$ is the unresolved model we desire to learn. Here $I_n^{2n}, R_{2n}^{n}$ are fixed restriction and interpolation operators
between two meshes, i.e.
\begin{equation}\label{equ:coarse-fine}
    \begin{aligned}
        & R_{2n}^{n}: \mbR^{2n\times 2n} \rightarrow \mbR^{n\times n}, \quad \mfu_{ij}^n = \frac{\mfu_{2i,2j}^{2n}+\mfu_{2i,2j+1}^{2n}+\mfu_{2i+1,2j}^{2n}+\mfu_{2i+1,2j+1}^{2n}}{4},     \\
        & I_{2n}^{n}: \mbR^{n\times n} \rightarrow \mbR^{2n\times 2n}, \quad \mfu_{2i,2j}^{2n} = \mfu_{2i,2j+1}^{2n} = \mfu_{2i+1,2j}^{2n} = \mfu_{2i+1,2j+1}^{2n} = \mfu_{ij}^n.
    \end{aligned}
\end{equation}
To obtain training data for $\mfu_n^t, \mfu_{2n}^t$, we
simulate \cref{equ:RD} (with the same initial condition)
on two meshes of sizes $64 \times 64$ and $128 \times 128$
respectively,
with time step $\Delta t = 0.01, N_{\text{step}} = T/\Delta t = 100$,
using a second-order Crank-Nicolson scheme. It remains to calculate the labels for the training data,
i.e. $\mfy_k^{2n} = \phi(\mfu_k^{2n})$.
These are obtained by using the first half of the \cref{equ:coarse-fine}.
Finally, we pair them
together to form the training data $\lbb (\mfu_n^1, \mfy_n^1), \cdots,
(\mfu_n^N, \mfy_n^N) \rbb$.
The data here is different from the one introduced in~\cref{learning} since the system is autonomous, so we do not include time indices.
To summarize, the training data
contains two patches of field component patterns
of size $100\times 128 \times 128$ corresponding to
$\mfu, \mfv$ respectively and two patches $\mfy_u, \mfy_v$ of the same shape and generated
according to \cref{equ:coarse-fine}.

The baseline algorithms we implemented to compare with ours is $\phi_{\OLS}$,
which minimizes the mean-squared error on the training data, i.e.
\bequ\label{ols}
	\phi_{\OLS} = \arg\min \mbE_{\mfu \sim \rho_0}\norml \phi_{\theta}(\mfu) - \phi(\mfu) \normr_2^2,
\eequ
where we use $\rho_0$ to denote the distribution of the field variable. The corresponding numerical results are labeled by ``OLS''.

\begin{figure}[ht]\label{RDcomp}
          \centering
          \centerline{\includegraphics[width=.6\linewidth]{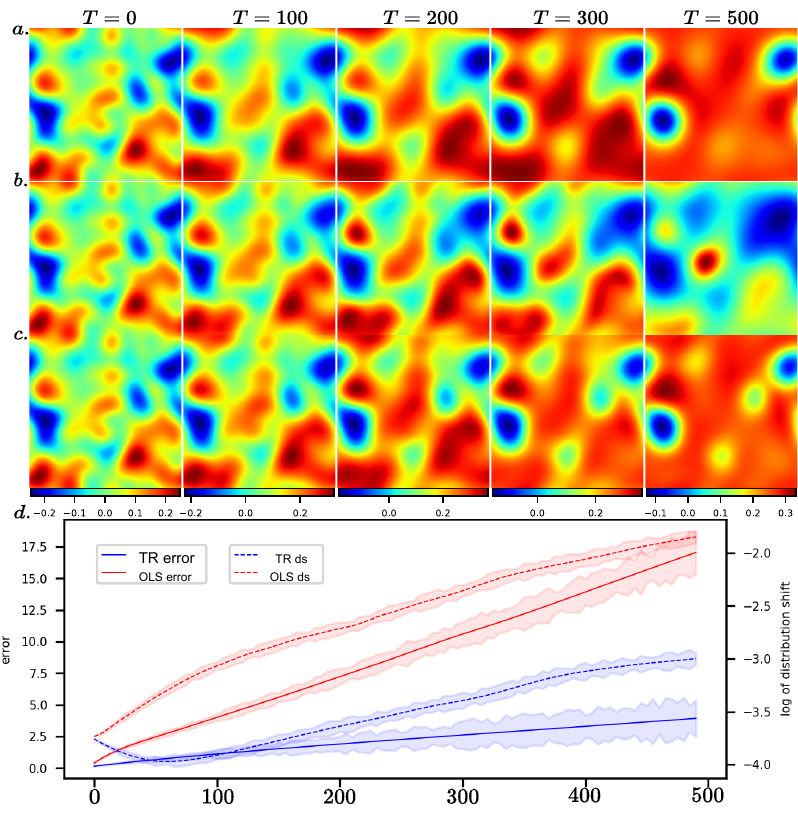}}
          \caption{
            Simulation of reaction-diffusion equations and configurations at time step: 0, 100, 200, 300, 500, (a) ground truth; (b) simulated fluid field using OLS estimator; (c) 
            simulated fluid field using $\abbrvalgName$ estimator. (d) is the comparison of the ordinary least squares and regularized one on reaction-diffusion equations: the solid line 
            represents the averaged error along the trajectory and the dashed line represents the averaged distribution shift calculated using function $F$ defined in \cref{ED-opt}. The shadow part is the standard deviation in 
            ten random experiments with different training and testing data. Both the error and the distribution shift increase at a lower speed in $\abbrvalgName$ simulation than in OLS simulations. This 
            suggests that the better performance of $\abbrvalgName$ algorithm in reaction-diffusion equations is partially caused by improving the distribution shift issue.}
\end{figure}
We summarize the comparison results in~\cref{RDcomp}.
Here, solid lines represent the errors along the trajectories
and dashed lines represent the distribution shift.
The shaded part is the standard deviation over ten random experiments
with different splits of the training and testing data.
As can be observed, compared to our method the OLS baseline incurs
greater errors and distribution shifts.
Moreover, this difference increases with time.
In contrast, consistent with our analysis in the simplified setting,
our algorithm mitigates both the distribution shift and
the trajectory error.

Next, we consider the Navier-Stokes equation \cref{NS} as the test case of our algorithm \cref{alg-pseudocode}. In this case, the resolved model is an explicit scheme to evolve the velocity field while 
unresolved model is calculated via a fast CNN surrogate, which can accelerate the simulation for large grid systems. The simulation domain is rectangular with an aspect ratio $1/4$. On the upper and
lower boundary we impose the no-slip boundary condition on the velocity and on the
inlet, i.e. $x=0$ we specify the velocity field $\mfu$. Moreover, at the outlet, i.e. $x=n$ we pose zero-gradient
conditions on both the horizontal velocity and pressure, i.e. $\frac{\p p}{\p n}
= \frac{\p u}{\p n} = 0$. The inflow boundary is defined according to following
functional forms:
\bequ
    \mfu(0, y, t) = \begin{pmatrix}
        u(0, y, t)\\
        v(0, y, y)
    \end{pmatrix} = \begin{pmatrix}
        \exp\{-50(y-y_0)^2\} \\
        \sin t \cdot \exp\{-50(y-y_0)^2\}
    \end{pmatrix}
\eequ
The training configuration is calculated using the projection method
~\cite{weinan1995projection} with a staggered grid, i.e. the pressure is placed at
the center of each cell and horizontal (vertical) velocity is placed at
vertical (horizontal) edge of each cell.
The jet location $y_0$ is selected from $0.3$ to $0.7$ uniformly.

Again, we compare the performance of our algorithm and the OLS method. As
illustrated in~\cref{NS-cmp}, naively using OLS to estimate the unresolved models
will generally lead to an error blow-up, whereas using our regularized
estimator we can obtain a reasonable simulation along the whole time horizon.
Moreover, the trend of the trajectory error fits that of the distribution
shift. In particular, the error and distribution shift blow-ups occur simultaneously.
Our algorithm again decreases the trajectory error by mitigating the distribution shift problem.
Moreover, the blow-up of the error in the simulated dynamics is not explained
in \cref{thm:a-posteriori-error}
as the theory is established for linear dynamics while finite-time blow-up is
known to be a special feature of nonlinear dynamics.

We also notice the unphysical noise appearing in the numerical experiments of
the Navier-Stokes equation, i.e. \cref{NS-cmp} b. which is
also observed in~\cite{ling2016reynolds, ling2015evaluation, wu2019reynolds,
poroseva2016accuracy} when performing data-driven RANS modeling.
While this is not the main focus of this paper which deals with the
distribution shift issue, we observe that such noise mainly occurs in the
simulation of the NS equation instead of reaction-diffusion equations. 
Therefore, one possible explanation is that this noise is caused by
transport-dominated equations and diffusion can reduce this noise in
diffusion-dominated equations.

\begin{figure}[ht]\label{NS-cmp}
          \centering
          \centerline{\includegraphics[width=.6\linewidth]{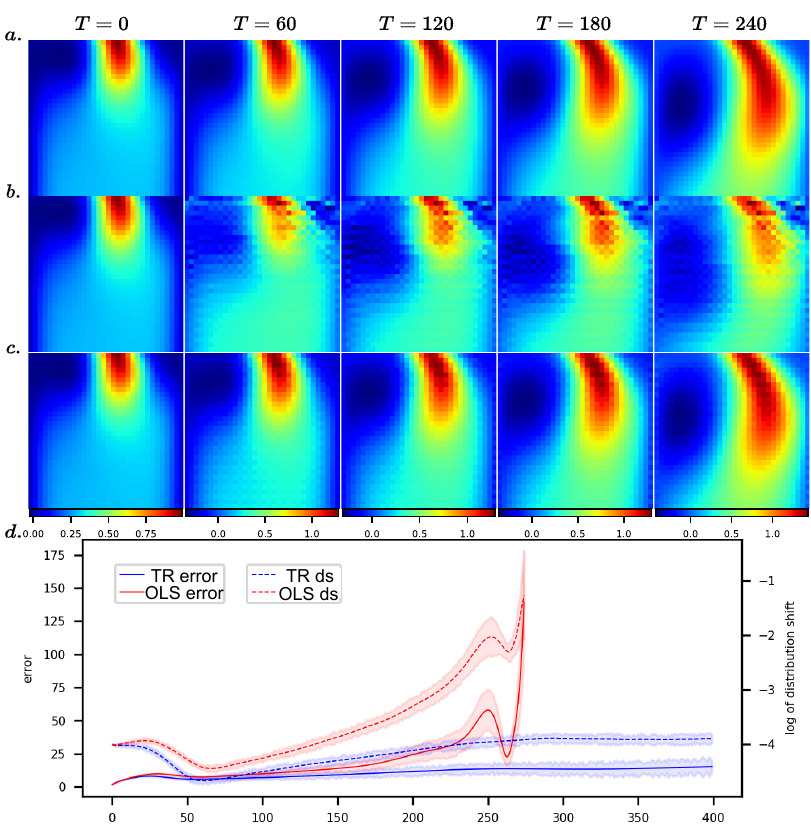}}
          \caption{
          Simulation of the Navier-Stokes equation and configurations at time step: 0, 60, 120, 180, 240, (a)
          ground truth; (b) simulated fluid velocity field using OLS estimator;
          (c) simulated fluid velocity field using $\abbrvalgName$ estimator;
          (d) the comparison of the OLS estimator and the regularized one.
          The solid lines represent the trajectory error and the dashed lines represent the distribution shift calculated usingusing function $F$ defined in \cref{ED-opt}.
          We observe that the OLS estimator leads to error (and DS)
          blow-up at around 270 steps, while our method remains stable.
          Again, the trends of error and distribution shift are highly correlated, and our method that controls distribution shift leads to error control,
          consistent with our analyses.}
\end{figure}

The machine-learning augmented NS simulations appear to have
a greater degree of distribution shift,
and our method has a relatively stronger benefit in this case.
This suggests that our method is most beneficial
in problems where the intrinsic distribution shift is severe.
In the next subsection, we will study this
more systematically by testing our method on
the reaction-diffusion equations with varying
diffusion coefficient $\gamma$ and
Navier-Stokes equations with varying Reynolds
numbers.

\subsection{Performance under distribution shift of different magnitudes}\label{numerical-cmp}
We now demonstrate concretely that our method brings
more significant improvements when the intrinsic distribution
shift is more severe.
Intuitively speaking, since our algorithm penalizes
the deviation of the simulated trajectories to the data manifold while OLS ignores this, it would be reasonable to expect that our
algorithm will outperform OLS more in problems where such deviations are large. A sanity check showing different distribution shift extents under different parameters is provided in Figure 3 of supplementary 
materials.

Here, we introduce two more baselines for ablation studies. Compared with the OLS
algorithm, these algorithms both add some regularization to the loss function
while their regularizations are general and not dynamics-specific compared to
our regularization. Consequently, any improved performance in our algorithm over
these regularized algorithms suggest that targeted regularization, which
takes into account the structure of the resolved part, effectively balances accuracy and stability.

The first baseline regularizes the original least squares objective by a term that quantifies some complexity of the network 
model, e.g.
\bequ\label{m-ols}
	\phi_{\text{mOLS}} = \arg\min \mbE_{\mfu \sim \rho_0}\norml \phi_{\theta}(\mfu) - \phi(\mfu) \normr_2^2 + \lambda\norml \theta \normr_2^2.
\eequ
Here the naive $L^2$ regularized algorithm is implemented by adjusting the weight decay~\cite{loshchilov2017decoupled} 
parameter of the Adam solver during the training process. The corresponding numerical results are labeled by ``mOLS'' 
(``m'' for modified).

The second baseline adds noise to the training data during training, which can be formulated as
\bequ\label{a-ols}
	\phi_{\text{aOLS}} = \arg\min \mbE_{\mfu \sim \rho_{\xi}}\norml \phi_{\theta}(\mfu) - \phi(\mfu) \normr_2^2.
\eequ
Notice that the training data follows the distribution $\rho_{\epsilon} = \rho_0
* \xi$ where $*$ denotes the convolution operator and $\xi$ is some random noise which we
take to be Gaussian random variables with an adjustable variance. In our implementation, we generate the noise and add it to the input during each epoch, so 
the inputs are not the same across epochs. Moreover, this noise $\xi$ should
be distinguished from the observational noise $\epsilon$ we introduced in
\cref{theory}. This method has been used in several previous works
~\cite{stachenfeld2022learned, DBLP:journals/corr/abs-2010-03409} to improve
stability and prevent the error from accumulating too fast. As this technique is
similar to adversarial attack and data augmentation~\cite{data-aug,
NIPS2014_5ca3e9b1, Goodfellow-et-al-2016}, we name this estimator and algorithm
as ``aOLS'' (``a'' stands for adversarial). We slightly abuse the terminology here by 
naming this random perturbation as some kind of adversarial attack.


\begin{table}[tb]
  \Large
  \caption{Performance on reaction-diffusion equations over 10 experiments.}
  \label{RD-table}
  \centering
  \resizebox{0.95\columnwidth}{!}{
  \begin{tabular}{p{2cm} p{3cm} p{3cm} p{3cm} p{3cm} p{3cm}}
  \toprule [1.5pt]
  \parbox{2cm}{  } &   $\gamma = 0.05$ & $\gamma = 0.10$ & $\gamma = 0.15$ & $\gamma = 0.20$ & $\gamma = 0.25$   \\ \midrule[1.5pt]
  & & \textbf{grid size:} & $64 \times 64$ & &  \\\midrule[0.5pt]
  OLS & 8.40E+1 & 1.04E+1 & 8.99E+0 & 2.22E+0 & 6.78E-1 \\ \midrule[0.5pt]
  mOLS & 8.90E+0 & 9.22E+0 & 1.41E+0 & 2.26E+0 & 9.98E-1 \\ \midrule[0.5pt]
  aOLS & 3.57E+1 & 4.00E+1 & 2.40E+0 & 1.79E+0 & 1.06E+0 \\ \midrule[0.5pt]
  $\abbrvalgName$ & \textbf{1.23E-1} & \textbf{1.06E-1} & \textbf{1.38E-1} & \textbf{1.85E-1} & \textbf{2.15E-1} \\ \midrule[0.5pt]
  Diff & 99.9\% & 99.0\% & 98.5\% & 91.7\% & 68.3\%  \\  \midrule[0.5pt]
  & & \textbf{grid size:} & $128 \times 128$ & &  \\\midrule[0.5pt]
  OLS & 4.10E+2 & 8.18E+1 & 5.11E+1 & 4.88E+0 & 2.25E+0 \\ \midrule[0.5pt]
mOLS & 2.40E+1 & 3.20E+1 & 2.20E+0 & 1.07E+1 & 1.25E+0 \\ \midrule[0.5pt]
aOLS & 1.46E+2 & 2.58E+2 & 8.69E+0 & 3.30E+0 & 1.79E+0 \\ \midrule[0.5pt]
TR & \textbf{2.79E-1} & \textbf{1.66E-1} & \textbf{2.42E-1} & \textbf{1.81E-1} & \textbf{1.91E-1} \\ \midrule[0.5pt]
Diff & 99.9\% & 99.8\% & 99.5\% & 96.3\% & 91.5\% 
  \\\bottomrule[1.5pt]
  \end{tabular}
  }
\end{table}

In~\cref{RD-table},
we test the algorithms under different diffusion coefficient $\gamma = 0.05$, 
$0.10, 0.15, 0.20, 0.25$.  We calculate the relative trajectory error at time $T = 1000$, i.e.  $\frac{\norml
\mfu_k - \wht \mfu_k \normr_2}{\norml
\mfu_k \normr_2}$ over ten random splits of the training and test data. The last row calculates
the performance difference between our method and the OLS baseline.
We observe that under almost all the settings our method out-performs
OLS, mOLS, aOLS baselines, and comparing horizontally,
we see that the relative improvement
decreases with $\gamma$, as the diffusion term mitigates the distribution shift.

We now consider the Navier-Stokes equation.
Since the baseline method quickly leads to error blow-up,
we use another comparison criterion.
We define a stopping time
$t_K = \arg\max_t \frac{\norml
\mfu_t - \wht \mfu_t \normr_2}{\norml
\mfu_t \normr_2} \leq K$ where $K$ is an error threshold to be determined during the experiments.
We calculate the first
time the trajectory error reaches a threshold under different
Reynolds numbers and different mesh sizes.
The results are shown in~\cref{NS-table}.
Under almost all the scenarios, our method outperforms all the baseline algorithms by a larger $t_K$. This validates 
the effectiveness of our algorithm. Comparing \cref{NS-table} horizontally, one finds that the improvement of our
method is also increasing concerning the Reynolds number.
As the flow field generally becomes more complex (and sensitive)
with increasing Reynolds number,
our method is expected to bring bigger improvements.
This is consistent with our experiments.


\begin{table}[tb]
  \Large
  \caption{Comparison of stopping time $t_K$ on Navier-Stokes equation over 10 experiments.}
  \label{NS-table}
  \centering
  \resizebox{0.95\columnwidth}{!}{
  \begin{tabular}{p{2cm} p{3cm} p{3cm} p{3cm} p{3cm} p{3cm}}
  \toprule [1.5pt]
  \parbox{2cm}{  } &  $Re = 100$ & $Re = 200$ & $Re = 300$ & $Re = 400$ & $Re = 500$   \\ \midrule[1.5pt]
  & & \textbf{grid size:} & $128 \times 32$ & &  \\\midrule[0.5pt]
    OLS  & $435 \pm 15$ & $301 \pm 20$ & $240 \pm 12$ & $177 \pm 17$ & $120 \pm 7$   \\
    mOLS  & $420 \pm 12$ & $333 \pm 14$ & $254 \pm 12$ & $204 \pm 9$ & $210 \pm 7$   \\
    aOLS  & $477 \pm 25$ & $392 \pm 31$ & $450 \pm 23$ & $239 \pm 25$ & $321 \pm 20$   \\
    $\abbrvalgName$ &  $\mathbf{506 \pm 18}$ & $ \mathbf{497 \pm 28}$ & $\mathbf{482 \pm 29}$ & $\mathbf{442 \pm 21}$ & $\mathbf{452 \pm 13}$   \\  \midrule[0.5pt]
  & & \textbf{grid size:} & $256 \times 64$ & &  \\\midrule[0.5pt]
  OLS  & $602 \pm 38$ & $571 \pm 35$ & $355 \pm 25$ & $301 \pm 11$ & $148 \pm 22$   \\
    mOLS  & $623 \pm 18$ & $531 \pm 26$ & $489 \pm 20$ & $417 \pm 10$ & $298 \pm 9$   \\
    aOLS  & $\mathbf{651 \pm 48}$ & $601 \pm 42$ & $\mathbf{595 \pm 39}$ & $378 \pm 31$ & $398 \pm 32$   \\
    $\abbrvalgName$ &  $636 \pm 35$ & $\mathbf{602 \pm 28}$ & $585 \pm 23$ & $\mathbf{422 \pm 35}$ & $\mathbf{402 \pm 28}$
  \\\bottomrule[1.5pt]
  \end{tabular}
  }
\end{table}

\section{Conclusion}

In this paper, we establish a theoretical framework for machine-learning augmented
hybrid simulation problems, where a data-driven surrogate is used to accelerate
traditional simulation methods.
We identify the cause and effect of distribution
shift, i.e. the empirically observed phenomenon that
the simulated dynamics may be driven away from the support of the training data
due to systematic errors introduced by the data-driven surrogate and
magnified by the resolved components of the dynamics.
Based on this, we propose a tangent-space
regularized algorithm for training the surrogate for the unresolved part,
which incorporates the resolved model information to
control deviations from the true data manifold.
In the case of linear dynamics, we show our algorithm is
provably better than the traditional training method based on ordinary least
squares. Then, we validate our algorithm in numerical experiments, including
Turing instabilities in reaction-diffusion equations and fluid flow. In both cases,
our method outperforms baselines by better mitigating distribution shift,
thus reducing trajectory error. Important problems such as different
discretizations for training and simulating, 
and applications to turbulent flow simulations are beyond the scope of this
paper and left for future study.

\section*{Acknowledgements}

The research work presented is supported by the National Research Foundation, Singapore,
under the NRF fellowship (project No. NRF-NRFF13-2021-0005). We thank Zhixuan Li, Sohei Arisaka, 
Jiequn Han for valuable discussions.

\bibliography{main}
\bibliographystyle{siamplain}

\newpage
\appendix
\section{Linear dynamics}
\subsection{Auxiliary results}
We provide the proof of the following lemma.
\begin{lemma}\label{lem:matrix-exp-bound}
    Suppose $P$ is a matrix whose similarity transformation transforms $M$ to its Jordan form. The following bound for $M \in \mbR^{m\times m}$ holds
    \begin{equation*}
        \norml e^{Mt} \normr_2 \leq \cond(P)m^2(2 + t^{m-1})e^{\eig_{\max}(M) t}.
    \end{equation*}
\end{lemma}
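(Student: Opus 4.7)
\medskip

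\noindent\textbf{Proof plan.} The plan is to reduce the estimate to a polynomial-times-exponential bound on the Jordan form, using the given similarity $M = PJP^{-1}$, and then to absorb the polynomial factor into the expression $m^2(2+t^{m-1})$ by a simple case split on whether $t \le 1$ or $t \ge 1$.

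\medskip

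\noindent\textbf{Step 1: reduction to the Jordan form.} I would begin with
\begin{equation*}
    e^{Mt} \;=\; P e^{Jt} P^{-1},
    \qquad
    \|e^{Mt}\|_2 \;\le\; \|P\|_2 \|P^{-1}\|_2 \|e^{Jt}\|_2 \;=\; \cond(P)\, \|e^{Jt}\|_2,
\end{equation*}
so that everything reduces to controlling $\|e^{Jt}\|_2$. Write $J = \mathrm{diag}(J_1,\dots,J_k)$ with each block $J_i = \lambda_i I + N_i$ of size $m_i \le m$, where $N_i$ is the nilpotent superdiagonal shift. Since $N_i$ and $\lambda_i I$ commute, $e^{J_i t} = e^{\lambda_i t}\sum_{j=0}^{m_i-1}\frac{t^j}{j!}N_i^j$.

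\medskip

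\noindent\textbf{Step 2: block-wise bound.} Because $\|N_i\|_2 = 1$ (it is a shift matrix) and hence $\|N_i^j\|_2 \le 1$ for every $j\ge 0$, the triangle inequality gives
\begin{equation*}
    \|e^{J_i t}\|_2 \;\le\; e^{\Re(\lambda_i) t}\sum_{j=0}^{m_i-1}\frac{t^j}{j!}
    \;\le\; e^{\eig_{\max}(M)\, t}\sum_{j=0}^{m-1}\frac{t^j}{j!}.
\end{equation*}
For the block-diagonal assembly I would use $\|e^{Jt}\|_2 \le \sum_i \|e^{J_i t}\|_2$, which is crude but suffices; combined with the number of blocks being at most $m$, this contributes one factor of $m$.

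\medskip

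\noindent\textbf{Step 3: bounding the polynomial factor.} It remains to show $\sum_{j=0}^{m-1}\frac{t^j}{j!} \le m(2+t^{m-1})$, which together with the factor of $m$ from the block assembly yields the claimed $m^2(2+t^{m-1})$. I would split on $t$: for $t\le 1$, the series is bounded by $e<3\le m(2+t^{m-1})$ (for $m\ge 2$; the $m=1$ case is trivial). For $t\ge 1$, each term $t^j/j!$ with $0\le j\le m-1$ is bounded by $t^{m-1}$ since $j!\ge 1$ and $t^j\le t^{m-1}$, so the sum is at most $m\,t^{m-1} \le m(2+t^{m-1})$. Multiplying through gives the desired estimate.

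\medskip

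\noindent\textbf{Main obstacle.} There is no real obstacle; the only subtlety is choosing constants so that the simple case split yields exactly $m^2(2+t^{m-1})$ rather than a slightly worse expression, and being careful to absorb the block count into the polynomial prefactor rather than the exponential rate. The bound is deliberately loose, but this is consistent with how it is applied in the proof of \cref{thm:a-posteriori-error} via the function $Q_m(r,T)$.
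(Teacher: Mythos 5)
Your proof is correct and follows essentially the same route as the paper's: reduce to the Jordan form via $\cond(P)$, bound each block's exponential by a polynomial times $e^{\Re(\lambda_i)t}$, and absorb the polynomial via a case split on $t\le 1$ versus $t\ge 1$. The only difference is bookkeeping — the paper bounds $\|e^{J_it}\|_2$ by the sum of the entries of the explicit block exponential (getting $m_i^2(1+t^{m_i-1})$ and then taking a max over blocks), whereas you use $\|N_i^j\|_2\le 1$ and distribute the two factors of $m$ between the block count and the truncated exponential series; both yield the stated constant.
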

\begin{proof}
  Writing the Jordan decomposition of the matrix $M$ as 
  \begin{equation}
      M = PJP^{-1},
  \end{equation}
  where $J$ is the Jordan normal form of $M$ with Jordan block $J_1, J_2, \cdots, J_k$. Then, the exponential can be written as
  \begin{equation}
      e^{Mt} = Pe^{Jt}P^{-1} = P\diag(e^{J_1t}, \ldots, e^{J_kt})P^{-1}.
  \end{equation}
  We can bound the matrix norm of the LHS by
  \begin{equation}
      \norml e^{Mt} \normr_2 \leq \cond(P)\norml e^{Jt} \normr_2 \leq \cond(P)\max_{i=1, \ldots, k} \norml e^{J_it} \normr_2.
  \end{equation}
  It suffices to estimate the norm of the exponential of a Jordan block. Suppose we have the following Jordan block of shape $m_i \times m_i$
  \begin{equation}
      J_i = \begin{pmatrix}
          \lambda_i & 1 & 0 & \cdots & 0 \\
          0 & \lambda_i & 1 & \cdots & 0 \\
          \vdots & \vdots & \vdots & \ddots & \vdots \\
          0 & 0 & 0 & \cdots & \lambda_i
      \end{pmatrix}, \quad e^{J_it} = e^{\lambda_i t}\begin{pmatrix}
        1 & t & t^2 & \cdots & \frac{t^{m_i-1}}{(m_i-1)!} \\
        0 & 1 & t & \cdots & \frac{t^{m_i-2}}{(m_i-2)!} \\
        \vdots & \vdots & \vdots & \ddots & \vdots \\
        0 & 0 & 0 & \cdots & 1
    \end{pmatrix}.
  \end{equation}
  Lastly, we can bound the norm of this exponential as
  \begin{equation}
      \norml e^{J_it} \normr_2 \leq e^{\Re(\lambda_i) t}\sum_{i, j=1}^{m_i} \lp e^{J_it} \rp_{ij} \leq m_i^2(1 + t^{m_i-1})e^{\Re(\lambda_i) t},
  \end{equation}
  which concludes our proof.
\end{proof}

\subsection{Comparison of different methods over linear dynamics}
Here, we compare our method with OLS in a toy linear dynamics in
\cref{fig:linear-cmp-supp}. As the OLS estimator does not involve any penalty term, we
further introduce the modified OLS (mOLS) with the following objective function
\begin{equation}\label{equ:mOLS-error}
    l_{mOLS}(\wht C) = \mbE\lp \norml (C^* - \wht C) \mfu\normr_2^2 + \lambda \norml \wht C \normr_F^2 \rp,
\end{equation}
to quantitatively compare the performance of estimators with different regularizations.

We fix the overall dynamics to be
\begin{equation}
    \mfu_{n+1} = (A+BC)\mfu_n = F\mfu_n, \quad F = \begin{pmatrix}
        0.95 & 0        \\
        0 & 1.2
    \end{pmatrix}.
\end{equation}
In each test case, $B$ is set to be the identity matrix and C is randomly generated with each element following i.i.d. uniform distribution over $[0, 1]$. The initial condition is sampled from $\mfu_0 
\sim ( \mcN(\mathbf{0},
1), 0)$, i.e. the first coordinate from i.i.d.
standard Gaussian and the second coordinates fixed to $0$. It is simple to
conclude that all the data will concentrate on the stable manifold $\mbR \times
\{0\}$. The observational noise scale is set to
$0.001$ and the number of time steps to $50$. Based on the trajectories simulated
from these initial conditions, we calculate the OLS and $\abbrvalgName$ estimators and their corresponding trajectory error.

The performance of the tangent-space regularized 
algorithm achieves better performance than the OLS algorithm and mOLS. Meanwhile, as we increase the penalty
$\lambda$, the error curves become closer to the best rate of the error rate inside the stable manifold. More precisely, we draw the error propagation rates in the stable and unstable manifold using 
two dashed lines. This result quantitatively supports our theory from three perspectives: 
\begin{itemize}
\item The trajectory error of the OLS simulator coincides with the rate over the unstable manifold, i.e. $e_1$ and exhibits exponential blow-up. This supports our error bound for 
$\mfu_{\mathrm{OLS}}.$
\item The trajectory error of the TR simulator has two stages. At the early stage, the error changes with a rate close to the error rate in the stable manifold, i.e. $e_2 < 0 < e_1$. At the late stage, 
the error accumulates at a rate similar to the rate in the unstable manifold and OLS error rate. This implies two components in the error bound for the TR algorithm where the first term dominates at the 
early stage while the other dominates late dynamics. Moreover, as $\lambda\rightarrow\infty$, the length of the first stage increases, meaning a smaller simulation error.
\item Lastly, our TR outperforms the OLS algorithm with $L^2$-regularization. Although the rates of error accumulation are similar, the initial error of our algorithm is much smaller than that of 
regularized OLS, as our estimator is unbiased while the mOLS estimator is biased.
\end{itemize}
    \begin{figure}[ht]\label{linear-cmp}
        \centering
        \centerline{\includegraphics[width=.8\linewidth]{fig/exp2-1.pdf}}
        \caption{We compare the simulation error of various estimators in a synthetic linear dynamic where
    the data subspace $V(V^{\perp})$ is the stable (unstable) manifold. This quantitatively supports our theory from three perspectives: 1). The trajectory error of the OLS simulator coincides with 
    the rate $e_1$ over the unstable manifold; 2). The trajectory error of our algorithm has two stages, i.e. $e_2$-rate at the beginning and $e_1$ later, which correspond exactly to two parts in our main 
    theorem. 3). As $\lambda\rightarrow\infty$, the length of the first stage increases, meaning a smaller simulation error.}
    \end{figure}

Moreover, We
elaborate on the relationship between the long time prediction accuracy with
$\lambda$ in \cref{fig:linear-cmp-supp}. The first observation is that both regularized
methods outperform the naive OLS method as regularization penalizes the deviation
along the unstable direction. Secondly, the error of mOLS and TR both decrease
as regularization becomes stronger. However, as $\lambda$ becomes larger, the
improvement of the mOLS algorithm ceases and cannot achieve better performance
than $10^{-3}$. This is because regularization $\norml \wht C
\normr_F$ does not distinguish the directions lying inside the manifold
containing the true trajectories (here the line parallel to $(1, 0)$) and
those pointing outward. Large $\lambda$ guarantees the components along unstable
directions are small and also forces the components along directions inside the manifold to be small, which leads to performance deterioration. However,
the proposed $\abbrvalgName$ estimator distinguishes between the directions lying inside and pointing outwards of the manifold and selectively penalizes the latter.
Therefore, the error made along the unstable direction is eliminated
as $\lambda \rightarrow \infty$ while at the same time, the prediction along the
tangent space is not affected. This guarantees a continuous improvement of the
trajectory prediction for the tangent-space regularized algorithm,
outperforming the classical norm-based regularization.
\begin{figure}[ht]
          \centering
          \label{fig:linear-cmp-supp}
    \centerline{\includegraphics[width=.7\linewidth]{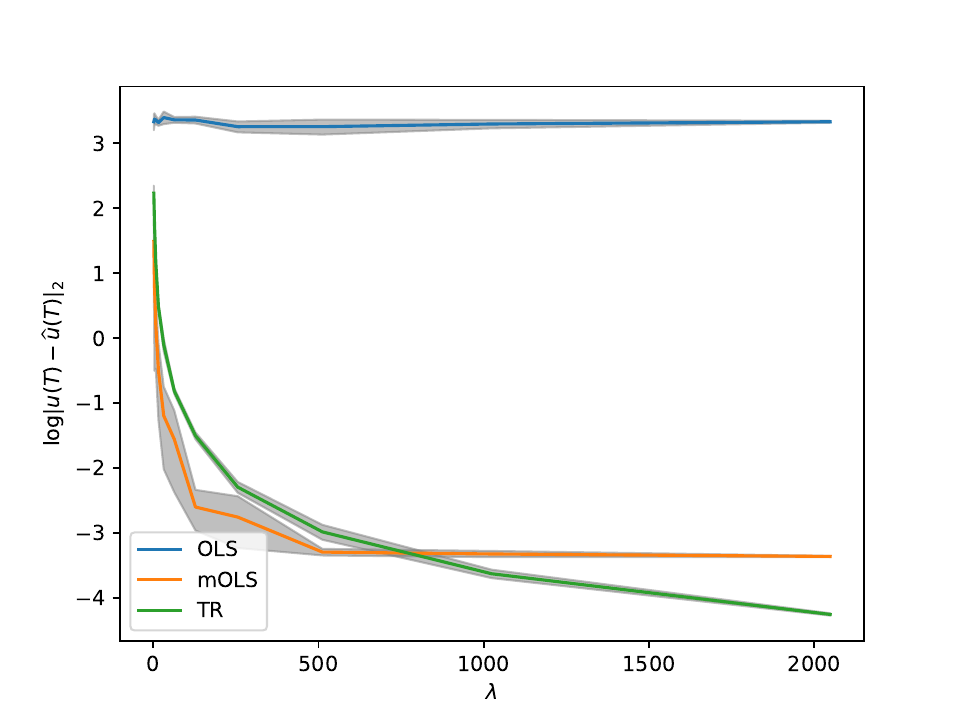}}
          \caption{
          In this figure, we compare our method with OLS and modified OLS. The
          horizontal axis denotes the strength of the penalty while the vertical
          axis denotes the accuracy of the prediction at terminal time. The
          scale of noise
          $\epsilon$ is set to
          $0.001$, time steps to $50$. All the cases have a low
          dimensional structure, i.e. only the first coordinate is non-zero. The shadow area represents the standard deviation calculated by 10 random tests.
          The first observation is that both regularized methods
          outperform the naive OLS method as regularization penalizes the
          deviation along the unstable direction. Secondly, the error of mOLS
          and TR both decrease as regularization becomes stronger. However, as
          $\lambda$ becomes greater, the improvement of the mOLS algorithm
          ceases and cannot achieve better performance than $10^{-3}$.}
\end{figure}

\section{Numerical experiments details}
\label{numeric-setup}
In this section, we first describe how to prepare the training data of reaction-diffusion equations and Navier-Stokes equations. Then, we discuss the experimental details of the neural networks 
and algorithms.
\subsection{Training data of reaction-diffusion equation}
We present the details of the training data preparation for the reaction-diffusion equation 
\begin{equation}
    \begin{aligned}
        	\frac{\p \mfu}{\p t} & = D \Delta \mfu + \phi(\mfu), \quad T \in [0, 20], 	\\
		\phi(\mfu) & = \phi(u, v) = \begin{pmatrix}
			u - u^3 - v + \alpha	\\
			\beta(u - v)
		\end{pmatrix}.
    \end{aligned}
\end{equation}
where $\mfu = (u(x, y, t), v(x, y, t))^T \in \mbR^2$ are two interactive
components, $D$ is the diffusion matrix, and $\mfR(\mfu)$ is source
term for the reaction. The parameters are set to be $\alpha=0.01,\beta=1.0,D=\begin{pmatrix}
    \gamma & 0    \\
    0 & 2\gamma
\end{pmatrix}, \gamma \in \{0.05, 0.10, 0.15, 0.20, 0.25\}$ in the simulation. To obtain the initial condition $\mfu_0$, we first sample i.i.d. from a normal
distribution:
\bequ
	u(x, y, 0), v(x, y, 0) \sim \mcN(0, 1).
\eequ
Then, we solve the equation from this initial condition for $200$ time steps with $dt = 0.01$ to obtain a physical velocity profile $\mfu$, which is taken to be the initial condition. The reason to 
discard the first $200$ time step velocity profiles is that they appear to be less physical and we believe adding them into the training set will deteriorate the performance of the data-driven model. 
Lastly, the boundary condition is taken to be periodic in the simulation domain
$[0, 6.4]^2$. Given all this information, we solve the equation using the Crank-Nicolson scheme where the non-linear term is treated explicitly. The $\Delta t$ of the simulation is set to $0.01$ and 
a total time step $1000$. To obtain the coarse-fine grid correction term, we first simulate the equation on the grid of size $2n$ from initial condition $\mfu_0^{2n}$, which provides velocity 
profiles $\mfu_i^{2n}, i=0,1,2,\cdots, T$. Then, for each time step $k$, starting from the velocity profile $\mfu_k^{2n}$, we first restrict it via $R_{2n}^{n}$ to obtain a velocity profile 
$\mfu_k^{n}$ over grid $n$. Next, we iterate this profile one step by Crank-Nicolson scheme and interpolate it back to grid $2n$ via $I_n^{2n}$. The procedure is summarized as
\bequ
    \mfy_k^{2n} = \mfu_{k+1}^{2n} - I_n^{2n} \circ f_n(R_{2n}^{n}(\mfu_k^{2n})).
\eequ
Along with $\mfu_k^{2n}$, $(\mfu_k^{2n}, \mfy_k^{2n})$ becomes a data pair we used to learn the unresolved model.
\subsection{Training data of Navier-Stokes equation}
We provide the details on the numerical scheme used to solve the Navier-Stokes equation with special focus on the resolved and unresolved parts of the solver.
This is also the structure of our MLHS algorithm. This equation is written as follows
\begin{equation}
    \begin{aligned}
        	\frac{\p \mfu}{\p t} + (\mfu \cdot \nabla)\mfu -  \nu \Delta \mfu & =   \nabla p, \quad T \in [0, 1], 	\\
		\nabla \cdot \mfu & = 0,
    \end{aligned}
\end{equation}
where $\mfu = (u(x, y, t), v(x, y, t))^T \in \mbR^2$ is velocity and $p$
pressure.

The computational domain of the Navier-Stokes equation is $[0, 4]\times[0, 1]$ and we use staggered grid pf size $128\times 32$ and $256 \times 64$. The boundary condition is given by
\begin{equation}
    \begin{aligned}
        u\Big|_{y=0} = v\Big|_{y=0} & = u\Big|_{y=1} = v\Big|_{y=1} = 0,      \\
        p\Big|_{x=4} = 0, \frac{\p v}{\p n}\Big|_{x=4} = 0, u\Big|_{x=0} & = \exp\{-50(y-y_0)^2\}, \\
        v\Big|_{x=0} & = \sin t \cdot \exp\{-50(y-y_0)^2\}.
    \end{aligned}
\end{equation}
While the initial value of velocity $u, v$ is set to vanish except for the boundary $x=0$.
To enforce the boundary condition, we use the ghost-cell method and the true computational boundary lies in the middle of the first grid cell. The grid size during the numerical simulation is given 
by $u: (n_x+2) \times (n_y+2), v: (n_x+2) \times (n_y+1), p: n_x \times n_y$ where $(n_x, n_y) = (128, 32), (256, 64).$ The pressure is obtained via solving a Poisson equation during 
each time step. Moreover, since the boundary ghost cells are only of numerical importance to force the boundary condition, we discard them when building the dataset of the Navier-Stokes simulation, 
i.e. the dataset consists of tuples $(u, v, p), u, v, p \in \mbR^{n_x \times n_y}$. Moreover, as we use staggered grid for simulation, the collocation points for $u, v, p$ are different for all the grid cells. We linearly interpolate the value of $u, v$ at the boundaries of cells to the middle point of cells which coincides with the collocation points for pressure to generate a consistent dataset. Similar to the reaction-diffusion case, we collect the field data after the first $100$ iterations to avoid 
those unphysical fluid field configurations.

\subsection{Different distribution shift extents with different simulation parameters}
For the Navier-Stokes equation,
the degree of distribution shift is induced by varying
the Reynolds number, which is verified in~\cref{fig:NSRe-ds}. It shows that as the Reynolds number is increased, both
the trajectory simulation error and distribution shift increase
for almost all sampled time steps.
\begin{figure}[ht]\label{fig:NSRe-ds}
          \centering
          \centerline{\includegraphics[width=.8\linewidth]{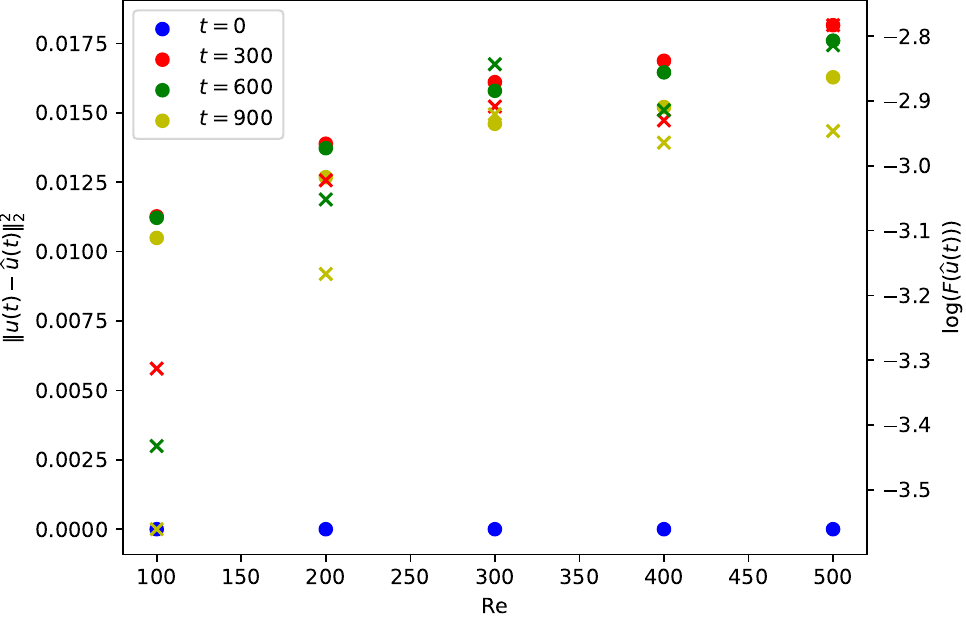}}
          \caption{
          In this figure, we illustrate the distribution shift and error of the
          simulated trajectories under different Reynolds numbers. The
          solid dots represent the error at given time steps and simulation
          parameter Re and the `$\times$'s denote the log of the
          corresponding distribution shift calculated using autoencoder.
          Different time steps are distinguished using different colors.
          From the results, we conclude that as the Reynolds number
          increases, both the simulated error and distribution shift increase
          for almost all the sampled time steps.
          This verifies the intuition that the Navier-Stokes equation with a greater Reynolds number
          suffers from more severe distribution shift.
          }
\end{figure}

\subsection{Network model details}
To capture the fluid patterns precisely, we use a modification of the
U-net\cite{ronneberger2015u}. The main structure is an autoencoder whose encoder and decoder steps are constructed as convolutional blocks which serve as a non-linear dimension reduction. The skip 
connection is used between
parallel layer to retain the information. The detailed architectures are described below where we only focus on one specific grid size.

We use the convolutional autoencoder to identify the intrinsic manifold of our training data. Specifically, the encoder is composed of a sequence of convolutional layers followed by max-pooling that 
extracts the information from a gradually large scale. The decoder is attached to the output of the encoder and is composed of a sequence of convolutional layers followed by upsampling to recover the 
state variable. For the depth of the encoder and decoder as well as the number of channels in each layer, we use the same setting as the network model for surrogate modeling discussed in the next paragraph. 
The key difference between the convolutional encoder we used for characterizing the underlying data manifold structure and the UNet for surrogate modeling is that UNet has skip connections between the 
layers of the same size in the encoder and decoder while the autoencoder does not. This is to avoid trivial reconstruction of the intrinsic manifold. For a more detailed discussion on using the convolutional autoencoder to approximate the nonlinear trial manifold, we recommend 
reading \cite{lee2020model}. Regarding the training issue of the autoencoder, we use all the data (no train-test split). The depth and size of the bottleneck layer of the network need to be tuned to make sure the 
training loss is small enough. Our empirical observation is that if the optimization error of the autoencoder is not small enough, e.g. $10^{-1}$, the performance of the regularized estimator will not 
show great improvement over the OLS estimator, meaning that the regularization is not effective. If the error is trained below $10^{-3}$, then our algorithm has significant improvement over 
the benchmark and the performance is no longer sensitive to further decrease the training loss of the autoencoder.

For the reaction-diffusion equation, the domain is a square so we use square block with size $128, 64, 32, 16, 8, 4, 8, 16, 32, 64, 128$ and number of blocks $2, 4, 8, 16, 16, 32$, $16, 16, 8, 4, 2$ 
respectively. In the downsampling stage where the block size shrinks to half each time, we use a convolutional layer with kernel size $4 \times 4$, padding $1$, and stride $2$ followed by a 
batch normalization layer. In the upsampling stage, between each block where the size doubles, we first apply an upsampling layer with scale factor $2$ followed by a convolutional layer with 
kernel size $3$, padding $1$, and stride $1$ and again a batch-normalization layer. For the Navier-Stokes equation, the domain is a rectangle and the blocks are modified to adjust the 
height-width ratio. The block size follows $256\times 64, 128\times 32, 128\times 32, 64\times 16, 32\times 8, 16 \times 4, 8 \times 2, 16 \times 4, 32\times 8, 64\times 16, 128\times 32, 256\times 64$ 
with the same block number at each level as reaction-diffusion cases. The only difference is that the output layer only contains one block since we are predicting the pressure, which only has one component.

During training different network models, we used Adam optimizer~\cite{kingma2014adam} with PyTorch~\cite{paszke2019pytorch} scheduling ``ReduceLROnPlateau'' and initial learning rate $10^{-4}$. For 
each specific scenario, i.e. Navier-Stokes equation with Reynolds number $400$ and grid size $128\times 32$, we generate 10 trajectories using high fidelity numerical solver. During training, one of 
them is randomly chosen as the test trajectory and others are used as training data. The training epoch of the autoencoder is set to $3000$ and of the OLS and the $\abbrvalgName$ estimator is set to $5000$. 
The batch size is chosen to be $1000$, which equals the trajectory length. The constant $K$ in $t_K = \arg\max_t \norml \wht \mfu_t - \mfu_t \normr \leq K$ is taken to be $100$ during the experiment.
\subsection{Choice of regularization strength}\label{lambda}
From a theoretical perspective, $\lambda \rightarrow \infty$ will provide the best estimator that stabilizes the hybrid simulation. However, as $\lambda$ increases, it takes more time steps 
for the objective function to reach a fixed threshold. In practice, as we increase the value of $\lambda$ while fixing the time steps, the final loss function keeps increasing and the estimator 
error, i.e. the former part of the loss function also increases. We compare four different choices of $\lambda = 1, 10, 100, 1000$ and fix it to $10$ throughout the whole experiment.

\end{document}